\DeclareMathOperator{\conv}{conv}
\DeclareMathOperator{\diag}{diag}
\newlength{\leftstackrelawd}
\newlength{\leftstackrelbwd}
\def\leftstackrel#1#2{\settowidth{\leftstackrelawd}%
{${{}^{#1}}$}\settowidth{\leftstackrelbwd}{$#2$}%
\addtolength{\leftstackrelawd}{-\leftstackrelbwd}%
\leavevmode\ifthenelse{\lengthtest{\leftstackrelawd>0pt}}%
{\kern-.5\leftstackrelawd}{}\mathrel{\mathop{#2}\limits^{#1}}}
\crefname{hypothesis}{Hypothesis}{Hypotheses}
\title{Some Sharp Error Bounds for Multivariate Linear Interpolation and Extrapolation\thanks{Submitted to the editors on September 26, 2022.
\funding{This research was supported by NSFC grants 11831002.}}}
\author{Liyuan Cao \thanks{Beijing International Center for Mathematical Research, Peking University, Beijing, China
(\email{caoliyuan@bicmr.pku.edu.cn}, \email{wenzw@pku.edu.cn}).}
        \and Zaiwen Wen \footnotemark[2] 
        \and Ya-xiang Yuan \thanks{State Key Laboratory of Scientific and Engineering Computing, Academy of Mathematics and Systems Science, Chinese Academy of Sciences, Beijing, China (\email{yyx@lsec.cc.ac.cn})}}
\begin{document}

\maketitle

\begin{abstract}
  We study in this paper the function approximation error of linear interpolation and extrapolation. 
Several upper bounds are presented along with the conditions under which they are sharp. 
All results are under the assumptions that the function has Lipschitz continuous gradient and is interpolated on an affinely independent sample set. 
Errors for quadratic functions and errors of bivariate linear extrapolation are analyzed in depth. 
\end{abstract}

\begin{keywords}
  linear interpolation, Lagrange interpolation, sharp error bounds
\end{keywords}

\begin{MSCcodes}
  41A05, 41A10, 41A80, 46N10
\end{MSCcodes}

\section{Introduction} \label{sec:intro}
Polynomial interpolation is one of the most basic techniques for approximating functions and plays an essential role in applications such as finite element methods and derivative-free optimization. 
This led to a large amount of literature concerning its approximation error. 
This paper contributes to this area of study by providing some sharp bounds on the function approximation error of linear interpolation. 
Specifically, given a function $f: \R^n \rightarrow \R$ and an affinely independent sample set $\cY = \{y_1,y_2,\dots,y_{n+1}\} \subset \R^n$, one can find a unique linear function $m: \R^n \rightarrow \R$ such that $m(y_i) = f(y_i)$ for all $y_i \in \cY$. 
Certainly, the approximation error $m(y_0) - f(y_0)$ depends on the function $f$, the sample set $\cY$, and the point $y_0\in\R^n$, where the error is measured. 
We investigate in this paper the sharp upper bound on $|m(y_0) - f(y_0)|$ for any given affinely independent $\cY$ and $y_0$ under the assumption that $f \in C_L^{1,1}(\R^n)$, where $C_L^{1,1}(\R^n)$ is the set of differentiable functions defined on $\R^n$ with Lipschitz continuous gradient, i.e., 
\begin{equation} \label{eq:Lipschitz}
    \|\nabla f(u_1) - \nabla f(u_2)\| \le L \|u_1 - u_2\| \quad \text{for all } (u_1,u_2) \in \R^n \times \R^n, 
\end{equation}
where $L>0$ is the Lipschitz constant and the norms are Euclidean. 

The sharp bound on $|m(y_0) - f(y_0)|$ is already discovered and proved in \cite{waldron1998error} for the case when $y_0\in\conv(\cY)$, the convex hull of $\cY$. 
However, in applications like model-based derivative-free optimization, where linear interpolation is employed to approximate the black-box objective function \cite{powell1994direct, DFO_book}, the model $m$ is used more often than not to estimate the function at a point outside $\conv(\cY)$ (sometimes referred to as linear extrapolation). 
As will be shown later in this paper, establishing the sharp error bound in this case is much more difficult, because consideration of the geometry of $\cY$ and $y_0$ is required, while it is not when $y_0\in\conv(\cY)$. 
Furthermore, having obtuse angles at the vertices of the simplex $\conv(\cY)$ complicates the problem significantly, and it is not a viable approach to linearly transform the input space so that $\conv(\cY)$ is non-obtuse, since condition \cref{eq:Lipschitz} will also be changed at the same time. 

The function approximation error of univariate ($n=1$) interpolation using polynomials of any degree is already well-studied, and the results can be found in classical literature such as \cite{davis1975book}. 
If a $(d+1)$-times differentiable function $f$ defined on $\R$ is interpolated by a polynomial of degree $d$ on $d+1$ unique points $\{y_1, y_2, \dots, y_{d+1}\} \subset \R$, then the resulting polynomial $m_d$ has the approximation error 
\begin{equation} \label{eq:Cauchy remainder}
    f(y_0) - m_d(y_0) = \frac{(y_0-y_1)(y_0-y_2)\cdots(y_0-y_{d+1})}{(d+1)!} f^{(d+1)}(\xi) \quad \text{for all } y_0 \in \R
\end{equation} 
for some $\xi$ with $\min(y_0,y_1,,\dots,y_{d+1}) < \xi < \max(y_0,y_1,\dots,y_{d+1})$. 
Unfortunately this result cannot be extended to multivariate interpolation directly, even if the polynomial is linear ($d=1$). 

The function approximation error of multivariate ($n>1$) polynomial interpolation has been studied by researchers from multiple research fields. 
Motivated by their application in finite element methods, errors in both Lagrange and Hermite interpolation with polynomials of any degree were analyzed in \cite{ciarlet1972general}. 
As a part of an effort to develop derivative-free optimization algorithms, a bound on the error of quadratic interpolation was provided in \cite{powell2001lagrange}. 
The sharp error bound for linear interpolation was found by researchers of approximation theory for the case when $y_0 \in \conv(\cY)$ using the unique Euclidean sphere that contains $\cY$ \cite{waldron1998error}. 
Following \cite{waldron1998error}, a number of sharp error bounds were derived in \cite{stampfle2000optimal} for linear interpolation under several different smoothness or continuity assumptions in addition to \cref{eq:Lipschitz}. 

Following these works, we investigate in this paper the sharp bound on the function approximation error of linear interpolation when $y_0\not\in\conv(\cY)$. 
Our approach involves treating the problem of finding the sharp bound as two optimization problems. 
This first one is to minimize an upper bound $z$: 
\begin{equation} \label{prob:P} \everymath{\displaystyle} \begin{array}{ll} 
    \min_z &z \\
    \text{s.t. } &z \ge |m(y_0) - f(y_0)| \quad \text{for all } f \in C_L^{1,1}(\R^n); 
\end{array} \end{equation}
and the second one is to maximize the error with respect to functions in $C_L^{1,1}(\R^n)$: 
\begin{equation} \label{prob:D} \everymath{\displaystyle} \begin{array}{ll} 
    \max_f &|m(y_0) - f(y_0)| \\
    \text{s.t. } &f \in C_L^{1,1}(\R^n).  
\end{array} 
\end{equation}
It is not realistic to directly solve these two optimization problems due to the difficulty in handling the inclusion $f \in C_L^{1,1}(\R^n)$, but we will use them as a guide to our results. 
Our main contributions are as follows. 
\begin{enumerate}
    \item For functions in $C_L^{1,1}(\R^n)$, an upper bound on the function value difference between any two points is established using the gradients at both points. 
    \item An upper bound on the function approximation error of linear interpolation is derived and proved to be sharp when $y_0\in\conv(\cY)$ and when $y_0\not\in\conv(\cY)$ if certain condition is met. 
    \item The largest function approximation error achievable by quadratic functions in $C_L^{1,1}(\R^n)$ is found and the condition under which it is an upper bound on the error achievable by all functions in $C_L^{1,1}(\R^n)$ is determined. 
    \item A formula is provided for the sharp bound on the function approximation error of bivariate ($n=2$) linear interpolation. 
\end{enumerate}

The paper is organized as follows. 
Our notation and the preliminary knowledge are introduced in \cref{sec:preliminaries}. 
In \cref{sec:phase1}, we improve an existing upper bound by first generalizing and then minimizing it. 
In \cref{sec:phase2}, we solve problem~\cref{prob:D} while limiting $f$ to quadratic functions. 
In \cref{sec:phase3}, we show how to calculate the sharp bound on function approximation error of bivariate linear interpolation. 
We conclude the paper in \cref{sec:discussion} by discussing our findings and the obstacles that prevented us from progressing further.



\section{Preliminaries} \label{sec:preliminaries}
For any vector $u$, we denote by $[u]_i$ its $i$th entry. 
For any matrix $U$, we denote by $[U]_{ij}$ the entry in its $i$th row and $j$th column. 
We denote by $\|\cdot\|$ the Euclidean norm. 
The inner product $\langle \cdot, \cdot \rangle$ is the summation of the entry-wise product, that is $\langle u_1, u_2 \rangle = \sum_i [u_1]_i [u_2]_i$ for any pair of vectors $(u_1,u_2)$, and $\langle U_1, U_2 \rangle = \sum_{ij} [U_1]_{ij} [U_2]_{ij}$ for any pair of matrices $(U_1,U_2)$. 

Let $e_i$ be the vector that is all 0 but having 1 as its $i$th entry. 
Let $Y \in \R^{(n+1)\times n}$ be the matrix such that its $i$th row $\displaystyle Y^T e_i = y_i-y_0$ for all $i\in\{1,2,\dots,n+1\}$. 
Define $\phi:\R^n \rightarrow \R^{n+1}$ as the function that for all $u\in\R^n$
\[ [\phi(u)]_i = \left\{ \begin{array}{ll}
    1 &\text{if } i=1,  \\
    [0pt] [u]_{i-1} &\text{if } i \in \{2,3,\dots,n+1\}. 
\end{array} \right.
\]
We denote by $\Phi$ the $(n+1)$-by-$(n+1)$ matrix such that $\Phi_{ij} = [\phi(y_i-y_0)]_j$ for all $(i,j) \in\{1,2,\dots,n+1\}^2$. 
Notice the affine independence of $\cY$ implies the nonsigularity of $\Phi$. 

Let $\ell \in \R^{n+1}$ be the {\it barycentric coordinate} of $y_0$ with respect to $\cY$, and $\ell_i = [\ell]_i$ for all $i \in \{1,2,\dots,n+1\}$. 
In addition, we define $\ell_0 = -1$. 
This coordinate system has the properties 
\begin{align}
\sum_{i=1}^{n+1} \ell_i f(y_i) &= m(y_0), \label{eq:Lagrange m}  \\
    \sum_{i=0}^{n+1} \ell_i &= 0, \label{eq:Lagrange 0} \\
    \sum_{i=0}^{n+1} \ell_i y_i &= 0, \label{eq:Lagrange Y} 
\end{align}
and can be calculated as $\ell = \Phi^{-T} \phi(\mathbf{0})$. 
It can be seen as the {\it Lagrange polynomials} evaluated at $y_0$, hence our choice of the symbol $\ell$. 

Without loss of generality, we assume the set $\cY = \{y_1,y_2,\dots,y_{n+1}\}$ is ordered in a way such that $\ell_1 \ge \ell_2 \ge \cdots \ge \ell_{n+1}$. 
We define the following two sets of indices: 
\begin{subequations} \begin{align} 
\cI_+ &= \{i\in \{0,1,\dots,n+1\}:~ \ell_i>0\} = \{1,2,\dots, |\cI_+|\}, \\
\cI_- &= \{i\in \{0,1,\dots,n+1\}:~ \ell_i<0\} = \{0, n+3-|\cI_-|, \dots, n+1\}. 
\end{align} \end{subequations}
Notice \cref{eq:Lagrange 0} implies $|\cI_+| \ge 1$, and it is possible for $n+3-|\cI_-| > n+1$, in which case $\cI_- = \{0\}$. 

We define 
\begin{equation} \label{eq:G}
    G = \sum_{i=0}^{n+1} \ell_i y_i y_i^T, 
\end{equation}
which has the property that for any $(u_1,u_2) \in \R^n\times\R^n$, 
\begin{equation} \label{eq:G recenter} \begin{aligned}
    \sum_{i=0}^{n+1} \ell_i (y_i-u_1) (y_i-u_2)^T 
    &= \sum_{i=0}^{n+1} \ell_i \left[y_iy_i^T - u_1y_i^T - y_iu_2^T + u_1u_2^T \right] \\
    &\leftstackrel{\cref{eq:Lagrange Y}}{=} \sum_{i=0}^{n+1} \ell_i \left[y_iy_i^T + u_1u_2^T\right] 
    \stackrel{\cref{eq:Lagrange 0}}{=} \sum_{i=0}^{n+1} \ell_i y_iy_i^T = G. 
\end{aligned} \end{equation}

It is well-known (see, e.g., section 1.2.2 of the textbook \cite{Nesterov_book}) that the inclusion $f\in C_L^{1,1}(\R^n)$ implies 
\begin{equation} \label{eq:Lipschitz quadratic}
    |f(u_2) - f(u_1) - \langle \nabla f(u_1), u_2 - u_1 \rangle| \le \frac{L}{2} \|u_2 - u_1\|^2 \text{ for all } (u_1,u_2) \in \R^n \times \R^n, 
\end{equation}
and that if $f$ is twice differentiable on $\R^n$, \cref{eq:Lipschitz} is equivalent to 
\begin{equation} \label{eq:Lipschitz Hessian}
    -LI \preceq \nabla^2 f(u) \preceq LI \text{ for all } u \in \R^n. 
\end{equation}
What is less well-known is that \cref{eq:Lipschitz quadratic} also implies \cref{eq:Lipschitz}. 
We show in the following proposition the equivalence among the conditions \cref{eq:Lipschitz,eq:Lipschitz quadratic,eq:Lipschitz stronger} for differentiable functions. 
The inequality \cref{eq:Lipschitz stronger} does not exist in the current literature to our best knowledge. 
\begin{proposition}
Assume $f:\R^n \rightarrow \R$ is differentiable. 
If \cref{eq:Lipschitz} holds, then for all $(u_1, u_2) \in \R^n \times \R^n$
\begin{equation} \label{eq:Lipschitz stronger} \begin{aligned}
    f(u_1) \le f(u_2) &+ \langle \frac{\nabla f(u_1) + \nabla f(u_2)}{2}, u_1 - u_2\rangle\\ 
    &+ \frac{L}{4} \|u_1-u_2\|^2 - \frac{1}{4L} \|\nabla f(u_1) - \nabla f(u_2)\|^2, 
\end{aligned} \end{equation} 
and vice versa. 
\end{proposition}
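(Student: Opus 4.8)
The plan is to prove the two implications of the equivalence separately. The reverse implication, that \cref{eq:Lipschitz stronger} implies \cref{eq:Lipschitz}, is a short symmetrization argument and requires no smoothness machinery, whereas the forward implication carries the real work and will go through a convexification of $f$.

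For the reverse direction I would write \cref{eq:Lipschitz stronger} twice, once as stated and once with the roles of $u_1$ and $u_2$ exchanged, and then add the two inequalities. The left-hand sides sum to $f(u_1)+f(u_2)$, matching the function-value terms on the right, so all occurrences of $f$ cancel; the two inner-product terms $\langle \frac{\nabla f(u_1)+\nabla f(u_2)}{2}, u_1-u_2\rangle$ and $\langle \frac{\nabla f(u_1)+\nabla f(u_2)}{2}, u_2-u_1\rangle$ are negatives of one another and also cancel. What remains is $0 \le \frac{L}{2}\|u_1-u_2\|^2 - \frac{1}{2L}\|\nabla f(u_1)-\nabla f(u_2)\|^2$, which rearranges immediately to \cref{eq:Lipschitz}.

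For the forward direction I would convexify by setting $\phi(u) = \frac{L}{2}\|u\|^2 - f(u)$. Using \cref{eq:Lipschitz} together with the Cauchy--Schwarz inequality, I would verify that $\langle \nabla\phi(u_1)-\nabla\phi(u_2), u_1-u_2\rangle \ge 0$ for all $u_1,u_2$, so $\phi$ is convex, and that $\|\nabla\phi(u_1)-\nabla\phi(u_2)\| \le 2L\|u_1-u_2\|$, so $\phi$ has a $2L$-Lipschitz gradient. The engine of the proof is the co-coercivity inequality $\phi(u_1) \ge \phi(u_2) + \langle \nabla\phi(u_2), u_1-u_2\rangle + \frac{1}{4L}\|\nabla\phi(u_1)-\nabla\phi(u_2)\|^2$, valid for a convex function with $2L$-Lipschitz gradient. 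Substituting $\nabla\phi(u) = Lu - \nabla f(u)$ and $\phi(u) = \frac{L}{2}\|u\|^2 - f(u)$ into this inequality and multiplying by $-1$, the quadratic terms $\frac{L}{2}\|u_1\|^2$ and $\frac{L}{2}\|u_2\|^2$ and the mixed terms collapse, and the factor $\frac{1}{4L}$ expands to produce exactly the $\frac{L}{4}\|u_1-u_2\|^2$, the $\langle \frac{\nabla f(u_1)+\nabla f(u_2)}{2}, u_1-u_2\rangle$, and the $-\frac{1}{4L}\|\nabla f(u_1)-\nabla f(u_2)\|^2$ appearing in \cref{eq:Lipschitz stronger}; tracking these cancellations is routine but must be done carefully to land the constants.

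The one ingredient that is not purely mechanical, and which I expect to be the main obstacle, is the co-coercivity inequality itself, since the excerpt only supplies the descent lemma \cref{eq:Lipschitz quadratic} as background. I would derive it from \cref{eq:Lipschitz quadratic} and convexity: fixing $u_2$, let $h(u) = \phi(u) - \langle \nabla\phi(u_2), u\rangle$, which is convex with $2L$-Lipschitz gradient and satisfies $\nabla h(u_2)=0$, so $u_2$ is a global minimizer of $h$. Applying \cref{eq:Lipschitz quadratic} for $h$ (with constant $2L$) at the point $u_1 - \frac{1}{2L}\nabla h(u_1)$ and using $h(u_2) \le h(u_1 - \frac{1}{2L}\nabla h(u_1))$ gives $h(u_2) \le h(u_1) - \frac{1}{4L}\|\nabla h(u_1)\|^2$, which is the co-coercivity inequality once rewritten in terms of $\phi$. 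The delicate points are establishing convexity of $\phi$ from the two-sided gradient bound and keeping the smoothness constant of $\phi$ equal to $2L$ rather than $L$, since this factor of two is precisely what produces the sharp coefficient $\frac{1}{4L}$ in \cref{eq:Lipschitz stronger}.
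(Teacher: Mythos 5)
Your argument is correct, and your reverse direction (add \cref{eq:Lipschitz stronger} to its copy with $u_1$ and $u_2$ exchanged) is exactly the paper's. The forward direction, however, takes a genuinely different route. The paper works directly with $f$: it writes the two one-sided quadratic bounds from \cref{eq:Lipschitz quadratic} --- a lower bound on $f(u)$ anchored at $u_1$ and an upper bound anchored at $u_2$ --- observes that the first right-hand side must not exceed the second for every $u$, and then substitutes the single optimal point $u=(u_1+u_2)/2+(\nabla f(u_1)-\nabla f(u_2))/(2L)$, which immediately produces \cref{eq:Lipschitz stronger}. You instead pass to the surrogate $\phi(u)=\tfrac{L}{2}\|u\|^2-f(u)$, verify via Cauchy--Schwarz and \cref{eq:Lipschitz} that $\phi$ has monotone (hence convex-inducing) gradient with Lipschitz constant $2L$, derive co-coercivity for $\phi$ by taking a gradient step of length $1/(2L)$ and comparing with the global minimizer of $h(u)=\phi(u)-\langle\nabla\phi(u_2),u\rangle$, and then unwind the substitution; I checked the constant-tracking and it does land exactly on \cref{eq:Lipschitz stronger}. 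Your route buys a conceptual explanation --- \cref{eq:Lipschitz stronger} is the co-coercivity inequality for the convexified function, symmetrized --- and connects the result to standard convex-analysis machinery, at the cost of several auxiliary facts (monotone gradient implies convexity, stationary point of a convex function is a global minimizer, the $2L$ smoothness constant of $\phi$) that the paper's two-line substitution avoids entirely. The paper's proof is shorter and fully self-contained given \cref{eq:Lipschitz quadratic}; in fact its chosen evaluation point is precisely the minimizer of the quadratic gap function, so the two arguments are optimizing the same slack, just in different coordinates.
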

\begin{proof}
Let \cref{eq:Lipschitz} hold. 
Given any $(u_1,u_2)$, since \cref{eq:Lipschitz} implies \cref{eq:Lipschitz quadratic}, for any $u \in \R^n$ there are 
\begin{align*}
    f(u) &\stackrel{\cref{eq:Lipschitz quadratic}}{\ge} f(u_1) + \langle \nabla f(u_1), u-u_1 \rangle - \frac{L}{2} \|u-u_1\|^2,  \\
    f(u) &\stackrel{\cref{eq:Lipschitz quadratic}}{\le} f(u_2) + \langle \nabla f(u_2), u-u_2 \rangle + \frac{L}{2} \|u-u_2\|^2.  
\end{align*}
Then the right-hand side of the first inequality is less than or equal to the right-hand side of the second inequality: 
\[ f(u_1) - f(u_2) + \langle \nabla f(u_1), u-u_1 \rangle - \langle \nabla f(u_2), u-u_2 \rangle - \frac{L}{2}\left(||u-u_1||^2+||u-u_2||^2\right) \le 0. 
\]
Setting $u=(u_1+u_2)/2 + (\nabla f(u_1) - \nabla f(u_2)) / (2L)$ yields \cref{eq:Lipschitz stronger}. 
On the other hand, \cref{eq:Lipschitz} can be obtained by adding together \cref{eq:Lipschitz stronger} and \cref{eq:Lipschitz stronger} with $u_1$ and $u_2$ reversed, then multiplying both sides of the inequality by $2L$ and taking their square roots. 
\end{proof}

\section{An Improved Upper Bound} \label{sec:phase1}
The results in \cite{ciarlet1972general} and \cite{powell2001lagrange} are obtained by comparing $f$ against its Taylor expansion at $y_0$. 
We generalize their approach in \cref{thm:phase1} by using the Taylor expansion of $f$ at an arbitrary $u \in \R^n$. 
\begin{theorem}\label{thm:phase1}
Assume $f \in C^{1,1}_L(\R^n)$. 
Let $m$ be the linear function that interpolates $f$ at any set of $n+1$ affinely independent vectors $\cY = \{y_1,\dots,y_{n+1}\}\subset \R^n$. 
The function approximation error of $m$ at any $y_0\in\R^n$ is bounded as 
\begin{equation} \label{eq:phase1 u}
    |m(y_0) - f(y_0)| \le \frac{L}{2} \sum_{i=0}^{n+1} |\ell_i| \|y_i-u\|^2, 
\end{equation}
where $u$ can be any vector in $\R^n$. 
\end{theorem}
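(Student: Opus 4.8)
The plan is to exploit the defining identities of the barycentric coordinates to collapse the error into a single weighted sum of function values, and then to compare $f$ against its affine Taylor expansion at the free point $u$. First I would use the convention $\ell_0 = -1$ together with $\cref{eq:Lagrange m}$ to fold the term $-f(y_0)$ into the interpolation sum, writing
\[ m(y_0) - f(y_0) = \sum_{i=1}^{n+1} \ell_i f(y_i) + \ell_0 f(y_0) = \sum_{i=0}^{n+1} \ell_i f(y_i). \]
This makes the index $i=0$ (corresponding to the evaluation point $y_0$) a full participant in the same sums as the interpolation nodes.

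The central observation is that linear interpolation reproduces affine functions exactly. Concretely, for the affine Taylor model $g(y) = f(u) + \langle \nabla f(u), y - u \rangle$, the constraints $\cref{eq:Lagrange 0}$ and $\cref{eq:Lagrange Y}$ give
\[ \sum_{i=0}^{n+1} \ell_i g(y_i) = f(u) \sum_{i=0}^{n+1} \ell_i + \Bigl\langle \nabla f(u), \sum_{i=0}^{n+1} \ell_i (y_i - u) \Bigr\rangle = 0, \]
where the second sum vanishes because $\sum_i \ell_i y_i = 0$ and $\sum_i \ell_i = 0$. I would therefore subtract this vanishing quantity from the error to obtain
\[ m(y_0) - f(y_0) = \sum_{i=0}^{n+1} \ell_i \bigl[ f(y_i) - g(y_i) \bigr]. \]

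The final step applies the triangle inequality together with the quadratic bound $\cref{eq:Lipschitz quadratic}$, which controls each deviation of $f$ from its affine model: $|f(y_i) - g(y_i)| = |f(y_i) - f(u) - \langle \nabla f(u), y_i - u \rangle| \le \tfrac{L}{2} \|y_i - u\|^2$. Weighting these by $|\ell_i|$ and summing yields exactly $\cref{eq:phase1 u}$, and the argument is valid for every $u \in \R^n$ since $u$ was arbitrary throughout.

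I do not anticipate a serious obstacle: once the error is expressed as $\sum_i \ell_i f(y_i)$, the result follows from the two affine-reproduction identities plus the standard descent-lemma inequality. The only point requiring care is the bookkeeping around the index $i = 0$; the convention $\ell_0 = -1$ is precisely what allows $y_0$ and the residual $-f(y_0)$ to be absorbed into the node sums, so that $\cref{eq:Lagrange 0}$ and $\cref{eq:Lagrange Y}$ apply uniformly over $i \in \{0, 1, \dots, n+1\}$ and the choice of basepoint $u$ never appears in the final bound except through the distances $\|y_i - u\|$.
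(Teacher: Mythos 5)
Your proof is correct and follows essentially the same route as the paper: both compare $f$ against its affine Taylor model at the arbitrary point $u$, use \cref{eq:Lagrange 0} and \cref{eq:Lagrange Y} to annihilate the constant and linear terms, and bound the remainders via \cref{eq:Lipschitz quadratic}. Your use of the triangle inequality on $\sum_i \ell_i [f(y_i)-g(y_i)]$ is a slightly cleaner packaging of the paper's sign-by-sign case analysis over $\cI_+$, $\cI_-$, and the sign of $m(y_0)-f(y_0)$, but it is not a different argument.
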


\begin{proof}
By \eqref{eq:Lipschitz quadratic}, we have for any $u \in \R^n$ 
\begin{subequations} \label{phase1 set of inequalities} \begin{align}
\ell_i [f(y_i) - f(u) - \langle \nabla f(u), y_i-u \rangle] &\le \ell_i \frac{L}{2} \|y_i-u\|^2 \text{ for all } i\in \cI_+, \\
-\ell_i [-f(y_i) + f(u) + \langle \nabla f(u), y_i-u \rangle] &\le -\ell_i \frac{L}{2} \|y_i-u\|^2 \text{ for all } i\in \cI_-. 
\end{align} \end{subequations}
Now add all inequalities above together. 
The sum of the zeroth-order terms (with respect to $f$) on the left-hand sides of the inequalities in \eqref{phase1 set of inequalities} is 
\[ \sum_{i=0}^{n+1} \ell_i [f(y_i) - f(u)] 
\stackrel{\eqref{eq:Lagrange 0}}{=} \sum_{i=0}^{n+1} \ell_i f(y_i)
\stackrel{\eqref{eq:Lagrange m}}{=} m(y_0) - f(y_0). 
\]
The sum of the first-order terms is 
\[ \langle \nabla f(u), \sum_{i=0}^{n+1} \ell_i (u-y_i) \rangle
\stackrel{\eqref{eq:Lagrange 0}}{=} \langle \nabla f(u), \sum_{i=1}^{n+1} \ell_i y_i \rangle
\stackrel{\eqref{eq:Lagrange Y}}{=} 0. 
\]
The sum of the right-hand sides is $L/2 \sum_{i=0}^{n+1} |\ell_i| \|y_i-u\|^2$. 
Thus the sum of the inequalities in \eqref{phase1 set of inequalities} is \eqref{eq:phase1 u} when $m(y_0)-f(y_0) \ge 0$. 
If the inequalities in \eqref{phase1 set of inequalities} have their left-hand sides multiplied by $-1$, they would still hold according to \eqref{eq:Lipschitz quadratic}, and their summation would be \eqref{eq:phase1 u} for the $m(y_0)-f(y_0) < 0$ case. 
\end{proof}

Considering the right-hand side of \eqref{eq:phase1 u} is a convex function of $u$ defined on $\R^n$, in the spirit of problem \cref{prob:P}, we can minimize this upper bound with respect to $u$, resulting in \cref{eq:phase1}. 
\begin{corollary} \label{cor:phase1}
Under the setting of \cref{thm:phase1}, the function approximation error of $m$ at any $y_0\in\R^n$ is bounded as 
\begin{equation} \label{eq:phase1}
    |m(y_0) - f(y_0)| \le \frac{L}{2} \sum_{i=0}^{n+1} |\ell_i| \|y_i-w\|^2, 
\end{equation}
where
\[ w = \frac{\sum_{i=0}^{n+1} |\ell_i| y_i}{\sum_{i=0}^{n+1} |\ell_i|}. 
\]
\end{corollary}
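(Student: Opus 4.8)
The plan is to carry out exactly the minimization suggested in the text preceding the statement: view the right-hand side of \cref{eq:phase1 u}, up to the constant factor $L/2$, as a function of the free vector $u$ and locate its minimizer through the first-order optimality condition. Concretely, I would set $g(u) = \sum_{i=0}^{n+1} |\ell_i| \|y_i - u\|^2$ and observe that each summand $|\ell_i|\|y_i-u\|^2$ is a convex quadratic in $u$ because $|\ell_i|\ge 0$, so $g$ is convex; moreover its Hessian is $2\big(\sum_{i=0}^{n+1}|\ell_i|\big) I$, which is positive definite as soon as $\sum_{i=0}^{n+1}|\ell_i|>0$, making $g$ strictly convex with a unique global minimizer.

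Next I would compute the gradient. Differentiating termwise gives $\nabla g(u) = 2\sum_{i=0}^{n+1}|\ell_i|(u-y_i) = 2\big[\big(\sum_{i=0}^{n+1}|\ell_i|\big)u - \sum_{i=0}^{n+1}|\ell_i|y_i\big]$. Setting $\nabla g(u)=0$ and solving for $u$ yields the stationary point $u = \big(\sum_{i=0}^{n+1}|\ell_i|y_i\big)\big/\big(\sum_{i=0}^{n+1}|\ell_i|\big) = w$. Because $g$ is strictly convex, this stationary point is its unique global minimizer, so substituting $u=w$ into \cref{eq:phase1 u} produces \cref{eq:phase1} and confirms that it is the tightest bound obtainable from \cref{thm:phase1} by varying $u$.

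The only point that requires any care -- and the nearest thing to an obstacle -- is to confirm that $w$ is well defined, i.e.\ that the denominator $\sum_{i=0}^{n+1}|\ell_i|$ is strictly positive. This follows immediately from $\ell_0=-1$ (so $|\ell_0|=1$), or equivalently from $|\cI_+|\ge 1$ as guaranteed by \cref{eq:Lagrange 0}; the same fact secures the strict convexity used for uniqueness. Everything else is a routine gradient computation, so I would not expect any genuine difficulty here.
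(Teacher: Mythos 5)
Your proposal is correct and is exactly the argument the paper intends: the corollary is obtained by minimizing the convex quadratic right-hand side of \cref{eq:phase1 u} over $u$, and the stationarity condition gives the weighted centroid $w$. Your additional remarks on strict convexity and on the positivity of $\sum_{i=0}^{n+1}|\ell_i|$ (via $\ell_0=-1$) only make explicit what the paper leaves implicit.
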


When $f \in C^{1,1}_L(\R^n)$, the proof of Theorem 3.1 in \cite{waldron1998error} shows that 
\begin{equation} \label{eq:Waldron} 
    |m(y_0) - f(y_0)| \le \frac{L}{2} \sum_{i=0}^{n+1} \ell_i \|y_i\|^2, 
\end{equation}
and that \eqref{eq:Waldron} is sharp when $y_0 \in \conv(\cY)$, or equivalently when $\ell_i \ge 0$ for all $i\in\{1,2,\dots,n+1\}$. 
We show in \cref{thm:phase1 convex hull} that \eqref{eq:phase1} is the same as the sharp bound \eqref{eq:Waldron} when $y_0 \in \conv(\cY)$. 
\begin{theorem} \label{thm:phase1 convex hull}
When $y_0 \in \conv(\cY)$, the bounds \eqref{eq:phase1} and \eqref{eq:Waldron} are the same sharp bound. 
\end{theorem}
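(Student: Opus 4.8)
The plan is to show that, when $y_0\in\conv(\cY)$, the weighted centroid $w$ of \cref{cor:phase1} collapses onto $y_0$, after which the two right-hand sides can be matched by taking the trace of the matrix identity \cref{eq:G recenter}. First I would compute $w$. When $y_0\in\conv(\cY)$ we have $\ell_i\ge 0$ for every $i\in\{1,\dots,n+1\}$ while $\ell_0=-1$, so that $|\ell_i|=\ell_i$ for $i\ge 1$ and $|\ell_0|=-\ell_0=1$ (equivalently $\cI_-=\{0\}$). Separating off the $i=0$ term and invoking \cref{eq:Lagrange 0} gives the denominator $\sum_{i=0}^{n+1}|\ell_i|=1+\sum_{i=1}^{n+1}\ell_i=2$, while invoking \cref{eq:Lagrange Y} gives the numerator $\sum_{i=0}^{n+1}|\ell_i|y_i=y_0+\sum_{i=1}^{n+1}\ell_i y_i=2y_0$. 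Hence $w=2y_0/2=y_0$.

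Next I would substitute $w=y_0$ into \cref{eq:phase1}. The $i=0$ summand carries the factor $\|y_0-y_0\|^2=0$ and so drops out, and on the surviving indices $|\ell_i|=\ell_i$; re-inserting the (now vanishing) $i=0$ term lets me rewrite the right-hand side of \cref{eq:phase1} as $\frac{L}{2}\sum_{i=0}^{n+1}\ell_i\|y_i-y_0\|^2$. To match this against \cref{eq:Waldron}, I would take the trace of \cref{eq:G recenter} with $u_1=u_2=u$, which shows that $\sum_{i=0}^{n+1}\ell_i\|y_i-u\|^2=\operatorname{tr}(G)$ is independent of $u$. Evaluating this once at $u=y_0$ and once at $u=\mathbf{0}$ then yields $\sum_{i=0}^{n+1}\ell_i\|y_i-y_0\|^2=\sum_{i=0}^{n+1}\ell_i\|y_i\|^2$, which is exactly the right-hand side of \cref{eq:Waldron}. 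This establishes that the two bounds are equal.

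Sharpness is then immediate: \cref{eq:Waldron} is already known to be sharp when $y_0\in\conv(\cY)$ by the result of \cite{waldron1998error} quoted above, so once the two right-hand sides agree, \cref{eq:phase1} is sharp as well. I expect the only delicate point to be the bookkeeping around the absolute values: one must observe that the discrepancy between $|\ell_0|$ and $\ell_0$ is harmless precisely because the choice $w=y_0$ annihilates the $i=0$ term, so that the trace identity \cref{eq:G recenter}, which is phrased in terms of the signed weights $\ell_i$, applies without obstruction.
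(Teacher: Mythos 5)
Your proof is correct. The first half --- showing $w=y_0$ and then that the two right-hand sides coincide --- is essentially the paper's own computation; your use of the trace of \cref{eq:G recenter} (with $u_1=u_2=u$, evaluated at $u=y_0$ and $u=\mathbf{0}$) is just a repackaging of the paper's direct expansion of $\|y_i-y_0\|^2$ via \cref{eq:Lagrange 0} and \cref{eq:Lagrange Y}, and your remark that the $i=0$ term is annihilated by $w=y_0$ (so the $|\ell_0|$ versus $\ell_0$ discrepancy is harmless) is exactly the point the paper passes over silently. Where you genuinely diverge is sharpness: you delegate it to the quoted result of \cite{waldron1998error}, which is logically fine since the paper itself asserts that fact just before the theorem, whereas the paper re-proves sharpness by exhibiting the explicit extremal function $f(u)=\frac{L}{2}\|u\|^2$ and computing $m(y_0)-f(y_0)=\sum_{i=0}^{n+1}\ell_i\frac{L}{2}\|y_i\|^2$ directly from \cref{eq:Lagrange m}. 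The paper's choice buys a self-contained argument and, more importantly, puts the worst-case quadratic on display --- the observation that the extremal functions in \cref{thm:phase1 convex hull} and \cref{thm:phase1 negative} are quadratics is precisely what motivates the analysis of problem \cref{prob:quadratic} in \cref{sec:phase2}, so if you adopt your shorter route you lose that foreshadowing but nothing in the way of correctness.
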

\begin{proof}
When $\ell_i \ge 0$ for all $i\in\{1,2,\dots,n+1\}$, 
\[ w = \frac{y_0 + \sum_{i=1}^{n+1} \ell_i y_i}{1 +\sum_{i=1}^{n+1} \ell_i}
\stackrel{\eqref{eq:Lagrange Y}\eqref{eq:Lagrange 0}}{=} \frac{y_0 + y_0}{1 + 1}
= y_0. 
\]
The following chain of equalities shows the equivalence between \eqref{eq:phase1} and \eqref{eq:Waldron}: 
\[ \begin{aligned}
    \sum_{i=0}^{n+1} |\ell_i| \|y_i-w\|^2 
    &= \sum_{i=0}^{n+1} \ell_i \|y_i-y_0\|^2 
    = \sum_{i=0}^{n+1} \ell_i \left(\|y_i\|^2 - 2\langle y_i,y_0 \rangle + \|y_0\|^2\right) \\
    &\leftstackrel{\eqref{eq:Lagrange 0}}{=} \sum_{i=0}^{n+1} \ell_i \left(\|y_i\|^2 - 2\langle y_i,y_0 \rangle \right) 
    \stackrel{\eqref{eq:Lagrange Y}}{=} \sum_{i=0}^{n+1} \ell_i \|y_i\|^2. 
\end{aligned} \]
The sharpness can be shown with the function $f(u) = \frac{L}{2} \|u\|^2 \stackrel{\eqref{eq:Lipschitz Hessian}}{\in} C^{1,1}_L(\R^n)$, which has 
\[ m(y_0) - f(y_0) 
\stackrel{\eqref{eq:Lagrange m}}{=} \sum_{i=0}^{n+1} \ell_i f(y_i) 
= \sum_{i=0}^{n+1} \ell_i \frac{L}{2} \|y_i\|^2. 
\]
\end{proof}

\Cref{thm:phase1 negative} shows \cref{eq:phase1} can also be sharp for extrapolation. 
\begin{theorem} \label{thm:phase1 negative}
The bound \eqref{eq:phase1} is sharp when there is only one positive entry in $\ell$. 
\end{theorem}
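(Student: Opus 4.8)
The plan is to lean on \cref{cor:phase1}, which already certifies that the right-hand side of \cref{eq:phase1} is a valid upper bound for every $f\in C_L^{1,1}(\R^n)$; so proving sharpness reduces to exhibiting a single function in $C_L^{1,1}(\R^n)$ that attains equality in \cref{eq:phase1}. Under the hypothesis, the ordering $\ell_1\ge\ell_2\ge\cdots\ge\ell_{n+1}$ together with $\ell_0=-1$ forces $\cI_+=\{1\}$, i.e. $\ell_1>0\ge\ell_2\ge\cdots\ge\ell_{n+1}$, and \cref{eq:Lagrange 0} then gives $\ell_1=-\sum_{i\ne1}\ell_i=\sum_{i\ne1}|\ell_i|$.

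The first step is to evaluate the minimizing center $w$ of \cref{cor:phase1} explicitly, and I would show that $w=y_1$. From $\ell_1=\sum_{i\ne1}|\ell_i|$ one gets $\sum_{i=0}^{n+1}|\ell_i|=2\ell_1$, while \cref{eq:Lagrange Y} yields $\sum_{i\ne1}\ell_i y_i=-\ell_1 y_1$, so the numerator satisfies $\sum_{i=0}^{n+1}|\ell_i|y_i=\ell_1 y_1-\sum_{i\ne1}\ell_i y_i=2\ell_1 y_1$. Hence $w=2\ell_1 y_1/(2\ell_1)=y_1$, and the bound \cref{eq:phase1} collapses to $\tfrac{L}{2}\sum_{i\ne1}|\ell_i|\|y_i-y_1\|^2$ because the $i=1$ term vanishes.

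The second step is to produce the extremal function. I would take $f(u)=\tfrac{L}{2}\|u-y_1\|^2$, which lies in $C_L^{1,1}(\R^n)$ by \cref{eq:Lipschitz Hessian} since its Hessian is $LI$. Writing $m(y_0)-f(y_0)=\sum_{i=0}^{n+1}\ell_i f(y_i)$ via \cref{eq:Lagrange m,eq:Lagrange 0}, the $i=1$ contribution drops out and every remaining coefficient satisfies $\ell_i\le0$, so $m(y_0)-f(y_0)=\tfrac{L}{2}\sum_{i\ne1}\ell_i\|y_i-y_1\|^2=-\tfrac{L}{2}\sum_{i\ne1}|\ell_i|\|y_i-y_1\|^2$. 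Taking absolute values reproduces exactly the simplified right-hand side from the first step, so the bound is attained and therefore sharp.

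The crux of the argument is the identity $w=y_1$, and this is precisely where the single-positive-entry hypothesis is indispensable: it places the optimal center $w$ at a sample point, namely the unique vertex carrying positive weight, so that the simple quadratic centered there saturates the bound. Without this hypothesis $w$ generally coincides with no $y_i$, and one should not expect any single shifted quadratic $\tfrac{L}{2}\|u-c\|^2$ to attain \cref{eq:phase1}; once $w=y_1$ is in hand, the remaining computations are routine. A minor consistency check worth recording is that \cref{eq:Lagrange 0} with $\ell_0=-1$ forces $\sum_{i=1}^{n+1}\ell_i=1>0$, so $\ell_1>0$ always holds and the hypothesis is non-vacuous.
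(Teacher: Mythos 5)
Your proposal is correct and takes essentially the same route as the paper: both proofs hinge on showing $w=y_1$ from \cref{eq:Lagrange 0,eq:Lagrange Y} and then saturating the bound with a quadratic whose Hessian is $\pm LI$. Your witness $\tfrac{L}{2}\|u-y_1\|^2$ differs from the paper's $-\tfrac{L}{2}\|u\|^2$ only by a sign and a linear function, neither of which changes $|m(y_0)-f(y_0)|$, so the two computations are interchangeable.
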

\begin{proof}
Due to our ordering of the points in $\cY$, we have $\cI_+ = \{1\}$ if there is only one positive entry in $\ell$. 
In this case,  
\[  w = \frac{2\ell_1 y_1 - \sum_{i=0}^{n+1} \ell_i y_i}{2\ell_1 -\sum_{i=0}^{n+1} \ell_i} 
    \stackrel{\eqref{eq:Lagrange 0}\eqref{eq:Lagrange Y}}{=} \frac{2\ell_1 y_1}{2\ell_1} 
    = y_1. 
\] 
The bound \eqref{eq:phase1} equals $L/2$ multiplies 
\[ \begin{aligned}
    \sum_{i=0}^{n+1} |\ell_i| \|y_i-w\|^2 
    &= - \sum_{i=0}^{n+1} \ell_i \|y_i-y_1\|^2 
    = \text{trace}\left( - \sum_{i=0}^{n+1} \ell_i (y_i-y_1)(y_i-y_1)^T \right) \\
    &\leftstackrel{\eqref{eq:G recenter}}{=} \text{trace}\left( - \sum_{i=0}^{n+1} \ell_i y_iy_i^T \right)
    =-\sum_{i=0}^{n+1} \ell_i \|y_i\|^2. 
\end{aligned} \]
Consider the function $f(u) = -\frac{L}{2} \|u\|^2 \stackrel{\eqref{eq:Lipschitz Hessian}}{\in} C^{1,1}_L(\R^n)$. 
We have 
\[m(y_0) - f(y_0) 
\stackrel{\eqref{eq:Lagrange m}}{=} \sum_{i=0}^{n+1} \ell_i f(y_i)  
= -\sum_{i=0}^{n+1} \ell_j \frac{L}{2}\|y_i\|^2, 
\]
which shows \eqref{eq:phase1} is sharp. 
\end{proof}

\Cref{fig:phase1} shows three sets of areas in which $y_0$ can locate relative to any set of affinely independent $\cY \subset \R^2$. 
The ordering of the points in $\cY = \{y_1,y_2,y_3\}$ in \cref{fig:phase1} (and all figures hereafter) is arbitrary and not determined by the values of $\ell$, which depends on the location of $y_0$. 
Geometrically, if $\ell_i$ is the only positive element in $\ell$ for some $i\in\{1,2,\dots,n+1\}$, then $y_0$ locates in the cone 
\[ \left\{u = y_i + \sum_{j=1}^{n+1} \alpha_j(y_i - y_j):~ \alpha_j \ge 0 \text{ for all } j \in \{1,2,\dots,n+1\} \right\}. 
\]
Specifically in \cref{fig:phase1 negative}, if $\ell_i$ is the only positive element in $\ell$ for some $i\in\{1,2,3\}$, then $y_0$ is in the shaded cone originated from $y_i$. 
\begin{figure}[tbhp]
     \centering
     \subfloat[\raggedright the area covered by \cref{thm:phase1 convex hull}]{\label{fig:phase1 hull}\resizebox{0.3\linewidth}{!}{\begin{tikzpicture}
\filldraw[black] (1,0) circle (2pt) node[anchor=north] {$y_1$}; 
\filldraw[black] (-0.3,1) circle (2pt) node[anchor=east] {$y_2$}; 
\filldraw[black] (-1.1,-0.5) circle (2pt) node[anchor=south east] {$y_3$}; 

\fill[color=gray, opacity=0.7] (1,0) -- (-0.3,1) -- (-1.1,-0.5);

\draw[thick] (-1.99,2.3) -- (3,-1.5385);
\draw[thick] (0.5,2.5) -- (-1.7933,-1.8);
\draw[thick] (-3.5,-1.0714) -- (3,0.4762);

\end{tikzpicture}}}
     \hfill
     \subfloat[\raggedright the areas covered by \cref{thm:phase1 negative}]{\label{fig:phase1 negative}\resizebox{0.3\linewidth}{!}{\begin{tikzpicture}
\filldraw[black] (1,0) circle (2pt) node[anchor=north] {$y_1$}; 
\filldraw[black] (-0.3,1) circle (2pt) node[anchor=east] {$y_2$}; 
\filldraw[black] (-1.1,-0.5) circle (2pt) node[anchor=south east] {$y_3$}; 

\shade[shading angle=80] (3,0.4762) -- (1,0) -- (3,-1.5385);
\shade[shading angle=-175] (-1.99,2.3) -- (-0.3,1) -- (0.5,2.5);
\shade[shading angle=-40] (-3.5,-1.0714) -- (-1.1,-0.5) -- (-1.7933,-1.8);

\draw[thick] (-1.99,2.3) -- (3,-1.5385);
\draw[thick] (0.5,2.5) -- (-1.7933,-1.8);
\draw[thick] (-3.5,-1.0714) -- (3,0.4762);

\end{tikzpicture}}}
     \hfill
     \subfloat[the areas where \eqref{eq:phase1} is not proved to be sharp]{\label{fig:phase1 not covered} \resizebox{0.3\linewidth}{!}{\begin{tikzpicture}
\shade[shading angle=135] (0.5,2.5) -- (-0.3,1) -- (1,0) -- (3,0.4762);
\shade[shading angle=-115] (-1.99,2.3) -- (-0.3,1) -- (-1.1,-0.5) -- (-3.5,-1.0714);
\shade[shading angle=7] (-1.7933,-1.8) -- (-1.1,-0.5) -- (1,0) -- (3,-1.5385);

\filldraw[black] (1,0) circle (2pt) node[anchor=north] {$y_1$}; 
\filldraw[black] (-0.3,1) circle (2pt) node[anchor=east] {$y_2$}; 
\filldraw[black] (-1.1,-0.5) circle (2pt) node[anchor=south east] {$y_3$}; 

\draw[thick] (-1.99,2.3) -- (3,-1.5385);
\draw[thick] (0.5,2.5) -- (-1.7933,-1.8);
\draw[thick] (-3.5,-1.0714) -- (3,0.4762);

\end{tikzpicture}}}
    \caption{A visualization of results in \cref{sec:phase1} for bivariate interpolation and extrapolation.}
    \label{fig:phase1}
\end{figure}
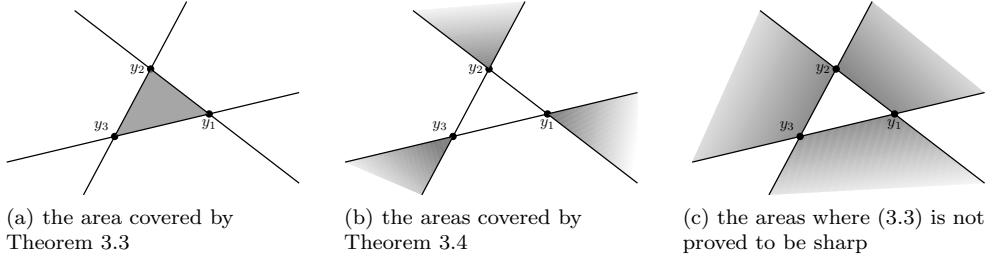

\section{The Worst Quadratic Function} \label{sec:phase2}
Considering under the settings in both \cref{thm:phase1 convex hull} and \cref{thm:phase1 negative} the optimal set of \cref{prob:D} contains at least one quadratic function, so it would be interesting to investigate \cref{prob:D} under the condition that $f$ is quadratic. 
Let $f$ be a quadratic function of the form $f(u) = c + \langle g, u \rangle + \langle Hu, u\rangle / 2$ with $c\in\R, g\in\R^n$, and symmetric $H \in \R^{n \times n}$. 
Because of \cref{eq:Lipschitz Hessian} and  
\[ \begin{aligned}
m(y_0) - f(y_0) ~
&\leftstackrel{\cref{eq:Lagrange m}}{=} \sum_{i=0}^{n+1} \ell_i f(y_i) 
= \sum_{i=0}^{n+1} \ell_i \left[c + \langle g, y_i \rangle + \frac{1}{2} \langle Hy_i, y_i\rangle \right] \\
&\leftstackrel{\cref{eq:Lagrange Y}}{=} \sum_{i=0}^{n+1} \ell_i \left[c + \frac{1}{2} \langle Hy_i, y_i \rangle \right] 
\stackrel{\cref{eq:Lagrange 0}}{=} \sum_{i=0}^{n+1} \ell_i \left[\frac{1}{2} \langle Hy_i, y_i \rangle \right] \\
&= \frac{1}{2}  \langle \sum_{i=0}^{n+1} \ell_i y_i y_i^T, H \rangle 
\stackrel{\cref{eq:G}}{=} \frac{1}{2}  \langle G, H \rangle,  
\end{aligned} \]
the optimization problem described above can be formulated as 
\begin{equation} \label{prob:quadratic} \everymath{\displaystyle} \begin{array}{ll}
    \max_H &\langle G, H \rangle / 2  \\
    \text{s.t.} &-L I \preceq H \preceq L I. 
\end{array} \end{equation}
The absolute sign in the objective function is dropped due to the symmetry of \cref{eq:Lipschitz}, that is, $-f \in C_L^{1,1}(\R^n)$ for any $f \in C_L^{1,1}(\R^n)$, and the objective function value for the two functions $f$ and $-f$ are negatives of each other. 

Problem \cref{prob:quadratic} can be solved analytically. 
Since $G$ is real and symmetric, it must have eigendecomposition $G = P \Lambda P^T$, where $\Lambda \in \R^{n \times n}$ is a diagonal matrix of eigenvalues, and $P \in \R^{n \times n}$ is the orthonormal matrix whose columns are the corresponding eigenvectors. 
The objective function $\langle G,H \rangle/2 = \langle P\Lambda P^T, H \rangle/2 = \langle \Lambda, P^T H P \rangle/2$. 
Since $P$ is orthonormal, the constraint in \cref{prob:quadratic} is equivalent to $-L I \preceq P^T H P \preceq L I$, indicating all diagonal elements of $P^T H P$ are bounded between $-L$ and $L$. 
Since $\Lambda$ is diagonal, only the diagonal elements of $P^T H P$ would affect the objective function value. 
Therefore one solution to \cref{prob:quadratic}, denoted by $H^\star$, has the property $P^T H^\star P = L \text{sign}(\Lambda)$. 
This optimal solution is 
\begin{equation} \label{eq:Hstar}
    H^\star = L P \text{sign}(\Lambda) P^T. 
\end{equation} 

The solution \cref{eq:Hstar} indicates a maximum approximation error of $\langle G, H^\star \rangle/2$ by quadratic functions. 
We next investigate when $\langle G, H^\star \rangle/2$ is an upper bound on the approximation error achievable by all functions in $C^{1,1}_L(\R^n)$. 
To this end, we use a set of parameters $\{\mu_{ij}\}$ and a non-positive function $\psi$. 
The parameters $\{\mu_{ij}\}$ is derived from the eigendecomposition of $G$. 
We first determine in \cref{lem:Sylvester} the number of positive and negative eigenvalues in $G$. 

\begin{lemma} \label{lem:Sylvester}
The numbers of positive and negative eigenvalues in $G$ are $|\cI_+| - 1$ and $|\cI_-|-1$, respectively. 
\end{lemma}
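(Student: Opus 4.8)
The plan is to realize $G$ as a diagonal block of a congruence transform of the diagonal matrix $D = \diag(\ell_1, \dots, \ell_{n+1})$, and then let Sylvester's law of inertia carry the easily-read-off signature of $D$ over to $G$. First I would rewrite $G$ in terms of the data matrix $Y$. Setting $u_1 = u_2 = y_0$ in \cref{eq:G recenter} and discarding the vanishing $i=0$ term yields
\begin{equation*}
    G = \sum_{i=1}^{n+1} \ell_i (y_i - y_0)(y_i - y_0)^T = Y^T D Y,
\end{equation*}
since the $i$th row of $Y$ is $(y_i - y_0)^T$. Next, directly from the definition of $\phi$, the matrix $\Phi$ has the block form $\Phi = [\mathbf{1} \mid Y]$, where $\mathbf{1}\in\R^{n+1}$ is the all-ones vector forming the first column and $Y$ forms the remaining $n$ columns.

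The key step is to compute the congruence $\Phi^T D \Phi$ blockwise and observe that it is block diagonal:
\begin{equation*}
    \Phi^T D \Phi = \begin{pmatrix} \mathbf{1}^T D \mathbf{1} & \mathbf{1}^T D Y \\ Y^T D \mathbf{1} & Y^T D Y \end{pmatrix}.
\end{equation*}
The scalar corner is $\mathbf{1}^T D \mathbf{1} = \sum_{i=1}^{n+1} \ell_i = 1$ by \cref{eq:Lagrange 0} (recall $\ell_0 = -1$); the off-diagonal blocks equal $\big(\sum_{i=1}^{n+1}\ell_i(y_i - y_0)\big)^T = 0$ by \cref{eq:Lagrange Y} combined with \cref{eq:Lagrange 0}; and the lower-right block is $Y^T D Y = G$. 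Hence $\Phi^T D \Phi = \diag(1, G)$.

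Finally I would invoke Sylvester's law of inertia. Because $\Phi$ is nonsingular, $\Phi^T D \Phi$ and $D$ share the same numbers of positive and negative eigenvalues. On the one hand, $D$ has exactly $|\cI_+|$ positive and $|\cI_-|-1$ negative diagonal entries: all positive barycentric coordinates lie in $\{1,\dots,n+1\}$ since $\ell_0 = -1 < 0$, while the $|\cI_-|$ negative ones include the index $0$, which is absent from the diagonal of $D$. On the other hand, the inertia of $\diag(1,G)$ is that of $G$ with one additional positive eigenvalue contributed by the scalar $1$. Matching the two tallies gives $|\cI_+| = 1 + (\text{number of positive eigenvalues of } G)$ and $|\cI_-| - 1 = (\text{number of negative eigenvalues of } G)$, which is the assertion of the lemma.

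Once the congruence is set up, the argument is essentially bookkeeping; the only genuine idea is recognizing that appending the all-ones column to $Y$ reproduces exactly $\Phi$, so that the barycentric identities collapse $\Phi^T D \Phi$ to block-diagonal form. I expect the only real pitfall to be the careful accounting of which indices carry the sign information, in particular remembering that $\ell_0 = -1$ belongs to $\cI_-$ yet sits outside the diagonal of $D$, which is precisely what produces the two ``$-1$'' shifts in the final counts.
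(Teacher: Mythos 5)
Your proposal is correct and is essentially identical to the paper's own proof: both realize $G$ as the lower-right block of the congruence $\Phi^T \diag(\ell_1,\dots,\ell_{n+1}) \Phi = \diag(1,G)$ and then apply Sylvester's law of inertia, with the same index bookkeeping for $\ell_0 = -1$. No gaps.
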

\begin{proof}
Consider the matrix $\bar{G} = \sum_{i=1}^{n+1} \ell_i \phi(y_i-y_0) \phi(y_i-y_0)^T = \Phi^T \text{diag}(\ell) \Phi$. 
The first element of the first column of $\bar{G}$ is $\sum_{i=1}^{n+1} \ell_i \stackrel{\cref{eq:Lagrange 0}}{=} 1$, while the rest of the column is $\sum_{i=1}^{n+1} \ell_i (y_i-y_0) \stackrel{\cref{eq:Lagrange Y}}{=} y_0 - \sum_{i=1}^{n+1} \ell_i y_0 \stackrel{\cref{eq:Lagrange 0}}{=} \mathbf{0}$. 
The bottom-right $n\times n$ submatrix of $\bar{G}$ is  
\[  \sum_{i=1}^{n+1} \ell_i (y_i-y_0) (y_i-y_0)^T 
    = \sum_{i=0}^{n+1} \ell_i (y_i-y_0) (y_i-y_0)^T  
    \stackrel{\cref{eq:G recenter}}{=} G. 
\]
Thus $\bar{G}$ and its eigendecomposition should be 
\[ \bar{G} = \begin{bmatrix} 1 &\mathbf{0}^T\\ \mathbf{0} &G \end{bmatrix}
= \begin{bmatrix} 1 &\mathbf{0}^T\\ \mathbf{0} &P \end{bmatrix}
\begin{bmatrix} 1 &\mathbf{0}^T\\ \mathbf{0} &\Lambda \end{bmatrix}
\begin{bmatrix} 1 &\mathbf{0}^T\\ \mathbf{0} &P^T \end{bmatrix}. 
\]
Then we have 
\[ \bar\Lambda \stackrel{\rm def}{=} \begin{bmatrix} 1 &\mathbf{0}^T\\ \mathbf{0} &\Lambda \end{bmatrix}
= \begin{bmatrix} 1 &\mathbf{0}^T\\ \mathbf{0} &P^T \end{bmatrix} 
\Phi^T \text{diag}(\ell) \Phi
\begin{bmatrix} 1 &\mathbf{0}^T\\ \mathbf{0} &P \end{bmatrix}, 
\]
which shows $\bar\Lambda$ is congruent to $\text{diag}(\ell)$. 
Then by Sylvester's law of inertia \cite{sylvester1852xix} (or \cite{horn2012matrix} Theorem 4.5.8), the number of positive and negative eigenvalues in $\bar\Lambda$ are $|\cI_+|$ and $|\cI_-|-1$, respectively. 
Since $\bar{G}$ shares the same eigenvalues as $G$ except an additional one that is 1, the lemma is proven. 
\end{proof}

Now we partition $\ell, G$, and $H^\star$ with respect to $\cI_+$ and $\cI_-$. 
Remember $\cY$ is assumed to be ordered in a way so that $\ell_1 \ge \ell_2 \ge \cdots \ge \ell_{n+1}$. 
Let $\ell_+ \in \R^{|\cI_+|}$ be the first $|\cI_+|$ elements of $\ell$, and $\ell_- \in \R^{|\cI_-|-1}$ be the last $|\cI_-|-1$ elements. 
Let $Y_+ \in \R^{|\cI_+| \times n}$ be the first $|\cI_+|$ rows of $Y$, and $Y_- \in \R^{(|\cI_-|-1) \times n}$ the last $|\cI_-|-1$ rows. 
Let $\Lambda_+ \in \R^{(|\cI_+|-1) \times (|\cI_+|-1)}$ and $\Lambda_- \in \R^{(|\cI_-|-1) \times (|\cI_-|-1)}$ respectively be the the diagonal matrices that contain the positive and negative eigenvalues of $G$, and $P_+ \in \R^{n \times (|\cI_+|-1)}$ and $P_- \in \R^{n \times (|\cI_-|-1)}$ their corresponding eigenvector matrices. 
Then we have 
\begin{equation} \label{eq:G+-} \begin{aligned} 
G~ &\leftstackrel{\cref{eq:G recenter}}{=} Y^T \diag(\ell) Y  = Y_+^T \diag(\ell_+) Y_+ + Y_-^T \diag(\ell_-) Y_- \\
&= P \Lambda P^T
= P_+ \Lambda_+ P_+^T + P_- \Lambda_- P_-^T
\end{aligned} \end{equation}
and 
\begin{equation} \label{eq:Hstar+-}
    H^\star = L P \text{sign}(\Lambda) P^T
    = L(P_+ P_+^T - P_- P_-^T). 
\end{equation} 

The definition of $\{\mu_{ij}\}$ involves $(Y_- P_-)^{-1}$. 
We prove in \cref{lem:invertible} this inverse is well-defined. 
\begin{lemma} \label{lem:invertible}
The matrix $Y_- P_-$ is invertible. 
\end{lemma}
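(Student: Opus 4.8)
The plan is to argue by contradiction, exploiting the fact that the quadratic form $p \mapsto p^T G p$ admits two representations with opposite sign behavior. First I would record that $Y_- P_-$ is a \emph{square} matrix: by \cref{lem:Sylvester} the matrix $P_-$ has exactly $|\cI_-|-1$ columns, which is also the number of rows of $Y_-$, so $Y_- P_- \in \R^{(|\cI_-|-1)\times(|\cI_-|-1)}$. Hence invertibility is equivalent to injectivity of the linear map $v \mapsto Y_- P_- v$, and it suffices to show that $Y_- P_- v = 0$ forces $v = 0$.

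So suppose, for contradiction, that $Y_- P_-$ is singular, and pick $v \ne 0$ with $Y_- P_- v = 0$. I would then set $p = P_- v \in \R^n$. Because the columns of $P_-$ are among the columns of the orthonormal matrix $P$, they are orthonormal, so $\|p\|^2 = v^T P_-^T P_- v = \|v\|^2 > 0$ and in particular $p \ne 0$; moreover $Y_- p = Y_- P_- v = 0$ by construction.

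The contradiction would come from evaluating $p^T G p$ in two ways. On the one hand, using the decomposition $G = Y_+^T \diag(\ell_+) Y_+ + Y_-^T \diag(\ell_-) Y_-$ from \cref{eq:G+-} together with $Y_- p = 0$, the second term drops out and
\[ p^T G p = (Y_+ p)^T \diag(\ell_+)\, (Y_+ p) \ge 0, \]
since every entry of $\ell_+$ is positive, i.e. $\diag(\ell_+) \succ 0$. On the other hand, $p = P_- v$ lies in the span of the eigenvectors of $G$ associated with its negative eigenvalues; using $P_-^T G P_- = \Lambda_-$, which follows from the orthonormality of $P$ applied to the eigendecomposition in \cref{eq:G+-}, I would obtain
\[ p^T G p = v^T P_-^T G P_- v = v^T \Lambda_- v < 0, \]
because $\Lambda_- \prec 0$ and $v \ne 0$. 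These two inequalities are incompatible, so $Y_- P_-$ cannot be singular.

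The argument is short, and I do not anticipate a serious obstacle. The one point requiring care is confirming that $Y_- P_-$ is genuinely square, so that ``invertible'' is the right notion rather than ``full rank''; this is precisely what \cref{lem:Sylvester} supplies by pinning down the number of negative eigenvalues of $G$. The conceptual heart of the plan is recognizing that $p^T G p$ is forced to be nonnegative by the vanishing of $Y_- p$ against the positive weights $\ell_+$, yet strictly negative because $p$ sits in the negative eigenspace of $G$; that tension is the step I would state most carefully.
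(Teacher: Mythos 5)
Your proposal is correct and is essentially the paper's own argument: both proofs assume $Y_-P_-$ is singular, form the vector $p=P_-v$ in the negative eigenspace with $Y_-p=0$, and derive a contradiction by evaluating $p^TGp$ once via $G=Y_+^T\diag(\ell_+)Y_+ + Y_-^T\diag(\ell_-)Y_-$ (giving $\ge 0$) and once via the eigendecomposition (giving $<0$). The only cosmetic difference is that you invoke $P_-^TGP_-=\Lambda_-$ directly where the paper writes out both eigenspace terms and kills the $\Lambda_+$ term using $P_+^Tp=0$.
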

\begin{proof}
For the purpose of contradiction, assume $Y_- P_-$ is singular. 
That means there is a non-zero vector $u \in \R^{|\cI_-|-1}$ such that $Y_- P_- u = 0$. 
Let $v = P_- u$. 
We have $Y_- v = 0$, $P_+ v = P_+ P_- u = 0$ and $P_-^T v = P_-^T P_- u = u$. 
Then we have the contradiction
\[ \begin{aligned} 
v^T G v &= (Y_+v)^T \text{diag}(\ell_+) Y_+v + (Y_-v)^T \text{diag}(\ell_-) Y_-v = (Y_+v)^T \text{diag}(\ell_+) Y_+v \ge 0 \\
v^T G v &= (P_+^T v)^T \Lambda_+ P_+^T v + (P_-^T v)^T \Lambda_- P_-^T v = (P_-^T v)^T \Lambda_- P_-^T v  = u^T \Lambda_- u < 0.  
\end{aligned} \]
\end{proof}

Now we formally define the parameters $\{\mu_{ij}\}$ and prove its essential properties \cref{eq:mu0 +,eq:mu0 -,eq:mu1 +,eq:mu1 -}. 
\begin{lemma} \label{lem:mu}
Consider the matrix $M \stackrel{\rm def}{=} \text{diag}(\ell_+) Y_+ P_- (Y_- P_-)^{-1}$ and set of real numbers $\{\mu_{ij}\}_{i\in\cI_+,~ j \in\cI_-}$ such that $\mu_{ij} = [M]_{i(j - n-2+|\cI_-|)}$ for all $(i,j)\in\cI_+\times(\cI_-\setminus\{0\})$ and $\mu_{i0} = \ell_i - \sum_{j \in \cI_-\setminus\{0\}} \mu_{ij}$ for all $i\in\cI_+$. 
The following properties hold: 
\begin{align}
    \sum_{j \in \cI_-} \mu_{ij} &= \ell_i &&\text{for all } i \in \cI_+, \label{eq:mu0 +}\\
    \sum_{i \in \cI_+} \mu_{ij} &= - \ell_j &&\text{for all } j \in \cI_-, \label{eq:mu0 -}\\
    (LI-H^\star) \sum_{j \in \cI_-} \mu_{ij} y_j &= (LI-H^\star) \ell_i y_i &&\text{for all } i \in \cI_+, \label{eq:mu1 +}\\
    (LI+H^\star) \sum_{i \in \cI_+} \mu_{ij}y_i &= -(LI+H^\star) \ell_j y_j &&\text{for all } j \in \cI_-. \label{eq:mu1 -}
\end{align}
\end{lemma}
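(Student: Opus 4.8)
The plan is to verify the four identities in turn, disposing of the ``zeroth-order'' identities \cref{eq:mu0 +,eq:mu0 -} first and then the ``first-order'' identities \cref{eq:mu1 +,eq:mu1 -}. Identity \cref{eq:mu0 +} is immediate: by definition $\mu_{i0}=\ell_i-\sum_{j\in\cI_-\setminus\{0\}}\mu_{ij}$, so the full sum over $j\in\cI_-$ telescopes to $\ell_i$. For \cref{eq:mu0 -} with $j\neq0$, I would read off the column sums of $M=\diag(\ell_+)Y_+P_-(Y_-P_-)^{-1}$. Writing $\mathbf1$ for the all-ones vector, $\mathbf1^T\diag(\ell_+)Y_+=\ell_+^TY_+=\sum_{i\in\cI_+}\ell_i(y_i-y_0)^T$, and \cref{eq:Lagrange 0,eq:Lagrange Y} give $\sum_{i=1}^{n+1}\ell_i(y_i-y_0)=0$, hence $\ell_+^TY_+=-\ell_-^TY_-$; the factor $(Y_-P_-)^{-1}$ then cancels and $\mathbf1^TM=-\ell_-^T$, which is exactly \cref{eq:mu0 -}. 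The case $j=0$ follows by combining this with \cref{eq:mu0 +} and $\sum_{i=0}^{n+1}\ell_i=0$.

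For \cref{eq:mu1 +,eq:mu1 -} I would exploit the spectral structure of $H^\star$. Writing $P=[\,P_+\mid P_-\mid P_0\,]$ with $P_0$ the eigenvectors of the zero eigenvalues of $G$, the form \cref{eq:Hstar+-} gives $LI-H^\star=L(2P_-P_-^T+P_0P_0^T)$ and $LI+H^\star=L(2P_+P_+^T+P_0P_0^T)$, so that $\ker(LI-H^\star)=\operatorname{range}(P_+)$ and $\ker(LI+H^\star)=\operatorname{range}(P_-)$. Thus \cref{eq:mu1 +} is equivalent to $\sum_{j\in\cI_-}\mu_{ij}y_j-\ell_iy_i\in\operatorname{range}(P_+)$, and \cref{eq:mu1 -} to $\sum_{i\in\cI_+}\mu_{ij}y_i+\ell_jy_j\in\operatorname{range}(P_-)$. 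Using \cref{eq:mu0 +,eq:mu0 -}, subtracting $y_0$ from every point leaves these differences unchanged, so I may replace each $y_i$ by the row $y_i-y_0$ of $Y$ (the $i=0$ term dropping out), and it remains to test orthogonality against $P_-,P_0$ (resp.\ $P_+,P_0$). The $P_-$ and $P_+$ components cancel by direct computation from the definition of $M$: letting $\mu_{i,-}$ denote the $i$th row of $M$ and $\mu_{+,j}$ its column indexed by $j$, one gets $(Y_-P_-)^T\mu_{i,-}=\ell_iP_-^T(y_i-y_0)$, while $P_+^TGP_-=0$ together with the block form \cref{eq:G+-} yields $(Y_+P_+)^T\mu_{+,j}=-\ell_jP_+^T(y_j-y_0)$.

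The delicate point, and the step I expect to be the main obstacle, is the component along $P_0$, i.e.\ the case where $G$ is singular (when $G$ is nonsingular $P_0$ is empty and this step is vacuous). Here I would prove the auxiliary fact that every $v$ with $Gv=0$ satisfies $Y_+v=0$ and $Y_-v=0$, i.e.\ $(y_i-y_0)^Tv=0$ for all $i$ with $\ell_i\neq0$. Since $\cY$ is affinely independent, $Y$ has rank $n$, so $\ker(Y^T)$ is one-dimensional; as $Y^T\ell_{1:n+1}=\sum_{i=1}^{n+1}\ell_i(y_i-y_0)=0$ with $\ell_{1:n+1}=(\ell_1,\dots,\ell_{n+1})^T\neq0$ (because $\ell_1>0$), this kernel is exactly $\operatorname{span}(\ell_{1:n+1})$. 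Writing $Gv=Y^T\diag(\ell_{1:n+1})(Yv)=0$ via \cref{eq:G recenter} then forces $\diag(\ell_{1:n+1})Yv\in\operatorname{span}(\ell_{1:n+1})$, so $(y_i-y_0)^Tv$ equals a common constant $\lambda$ on every index with $\ell_i\neq0$; pairing once more with $\sum_{i=1}^{n+1}\ell_i(y_i-y_0)=0$ shows $\lambda\sum_{i\colon\ell_i\neq0}\ell_i=0$, and since that sum equals $1$ we get $\lambda=0$. With $Y_+v=Y_-v=0$ in hand, the $P_0$ projections of both difference vectors vanish termwise, completing \cref{eq:mu1 +,eq:mu1 -}.
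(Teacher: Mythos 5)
Your proof is correct and its core is the same as the paper's: \cref{eq:mu0 +} by definition, \cref{eq:mu0 -} from the column sums of $M$ via \cref{eq:Lagrange 0,eq:Lagrange Y}, \cref{eq:mu1 +} from $P_-^T\bigl(Y_-^TM^T-Y_+^T\diag(\ell_+)\bigr)=\mathbf{0}$, and \cref{eq:mu1 -} from the off-diagonal block $P_+^TGP_-=\mathbf{0}$ of the eigendecomposition. Where you genuinely add something is the null-space component $P_0$. The paper writes $LI-H^\star=2LP_-P_-^T$ and $LI+H^\star=2LP_+P_+^T$, which silently uses $I=P_+P_+^T+P_-P_-^T$ and hence assumes $G$ is nonsingular, i.e., that no $\ell_i$ vanishes for $i\in\{1,\dots,n+1\}$; in the degenerate case one really has $LI-H^\star=L(2P_-P_-^T+P_0P_0^T)$, and your auxiliary fact --- that $\ker(Y^T)=\operatorname{span}(\ell_{1:n+1})$ forces $Y_+v=Y_-v=0$ whenever $Gv=0$ --- is exactly what is needed to kill the $P_0$ projection and make \cref{eq:mu1 +,eq:mu1 -} hold without that implicit assumption. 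That argument is correct and closes a small gap in the paper's own proof. The only loose end on your side is the $j=0$ instance of \cref{eq:mu1 -}: your identity $(Y_+P_+)^T\mu_{+,j}=-\ell_jP_+^T(y_j-y_0)$ only covers the columns of $M$, i.e., $j\in\cI_-\setminus\{0\}$; you should add the one line that summing these over $j$ and invoking $\sum_{i=1}^{n+1}\ell_i(y_i-y_0)=0$ gives $P_+^TY_+^T(\ell_+-M\mathbf{1})=\mathbf{0}$, which is the $j=0$ case (this is precisely the computation the paper carries out explicitly at the end of its proof).
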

\begin{proof}
The equations \cref{eq:mu0 +} are true by their definition. 
Since
\[ \begin{aligned}
    \text{diag}(\ell_-) \mathbf{1} + M^T \mathbf{1}
    &= \text{diag}(\ell_-) \mathbf{1} + (P_-^T Y_-^T)^{-1} P_-^T Y_+^T \text{diag}(\ell_+) \mathbf{1} \\
    &= (P_-^T Y_-^T)^{-1} P_-^T [Y_-^T \text{diag}(\ell_-) \mathbf{1} + Y_+^T \text{diag}(\ell_+) \mathbf{1}] 
    \stackrel{\cref{eq:Lagrange Y}}{=} \mathbf{0}, 
\end{aligned} \]
the equations \cref{eq:mu0 -} are also true. 
Notice $P_-^T (Y_-^T M^T - Y_+^T \text{diag}(\ell_+)) = \mathbf{0}$ by the definition of $M$, and $LI - H^\star \stackrel{\cref{eq:Hstar+-}}{=} L(P_+P_+^T + P_-P_-^T) - L(P_+P_+^T - P_-P_-^T) = 2L P_-P_-^T$. 
Following these two equations, we have for all $i \in \cI_+$, 
\[ \begin{aligned}
    (LI-H^\star) \left[\sum_{j \in \cI_-} \mu_{ij} y_j - \ell_i y_i\right] 
    &\leftstackrel{\cref{eq:mu0 -}}{=} (LI-H^\star) \left[\sum_{j \in \cI_-} \mu_{ij} (y_j-y_0) - \ell_i (y_i-y_0)\right] \\
    &= (LI-H^\star) (Y_-^T M^T - Y_+^T \text{diag}(\ell_+)) e_i \\
    &= 2L P_- P_-^T (Y_-^T M^T - Y_+^T \text{diag}(\ell_+)) e_i \\
    &= 2L P_- \mathbf{0} e_i = 0, 
\end{aligned} \] 
which proves \cref{eq:mu1 +}. 
To prove \cref{eq:mu1 -}, we use $G$ and its eigendecomposition. 
The diagonal matrix of the eigenvalues $\Lambda$ is 
\[  \begin{bmatrix} \Lambda_- &\mathbf{0}\\ \mathbf{0} &\Lambda_+ \end{bmatrix} 
    = \begin{bmatrix} P_-^T Y_-^T  &P_-^T Y_+^T\\ P_+^T Y_-^T &P_+^T Y_+^T \end{bmatrix}
    \begin{bmatrix} \text{diag}(\ell_-) &\mathbf{0}\\ \mathbf{0} &\text{diag}(\ell_+) \end{bmatrix}
    \begin{bmatrix} Y_- P_- &Y_- P_+\\ Y_+ P_- &Y_+ P_+ \end{bmatrix}, 
\]
which contains two equivalent block equalities with zero left-hand side. 
They are $P_+^T Y_-^T \diag(\ell_-) Y_- P_- + P_+^T Y_+^T \diag(\ell_+) Y_+ P_- = \mathbf{0}$, so 
\[ P_+^T Y_-^T \diag(\ell_-) + P_+^T Y_+^T \diag(\ell_+) Y_+ P_- (Y_- P_-)^{-1} = P_+^T Y_-^T \diag(\ell_-) + P_+^T Y_+^T M = \mathbf{0}. 
\]
Then with $LI+H^\star \stackrel{\cref{eq:Hstar+-}}{=} L(P_+P_+^T + P_-P_-^T) + L(P_+P_+^T - P_-P_-^T) = 2L P_+ P_+^T$, we obtain
\[  (LI + H^\star) (Y_-^T \diag(\ell_-) + Y_+^T M)
    = 2L P_+ P_+^T (Y_-^T \diag(\ell_-) + Y_+^T M) 
    = 2L P_+ \mathbf{0} = \mathbf{0}, 
\]
which proves \cref{eq:mu1 -} for all $j \in \cI_- \setminus \{0\}$; and 
\[ \begin{aligned} 
    (LI+H^\star) \left( \ell_0 y_0 + \sum_{i\in\cI_+} \mu_{i0} y_i \right) 
    &\leftstackrel{\cref{eq:mu0 -}}{=} 2LP_+P_+^T \sum_{i\in\cI_+} \mu_{i0} (y_i-y_0) \\
    &= 2LP_+P_+^T \sum_{i\in\cI_+} \left(\ell_i - \sum_{j\in\cI_-\setminus\{0\}} \mu_{ij}\right) (y_i-y_0) \\
    &= 2LP_+P_+^T Y_+^T (l_+ - M\mathbf{1}) \\
    &= 2LP_+P_+^T (Y_+^T l_+ + Y_-^T \ell_-)
    \stackrel{\cref{eq:Lagrange Y}}{=} \mathbf{0}, 
\end{aligned} \]
which proves \cref{eq:mu1 -} for $j = 0$. 
\end{proof}

The function $\psi$ is defined and proved non-positive in \cref{lem:psi}. 
It will be used to prove \cref{thm:phase2} in conjunction with the parameters $\{\mu_{ij}\}$. 
\begin{lemma} \label{lem:psi}
Assume $f \in C^{1,1}_L(\R^n)$. 
For any $(u_1, u_2) \in \R^n \times \R^n$ and any matrix $H \in \R^{n \times n}$, we have 
\begin{equation} \label{eq:Lipscthiz stronger H} \begin{aligned} 
    \psi(u_1,u_2,H) \stackrel{\rm def}{=} &f(u_1) -  f(u_2) - \frac{1}{2L} \langle (LI-H)(u_1-u_2), \nabla f(u_1) \rangle \\
    &- \frac{1}{2L} \langle (LI+H) (u_1-u_2), \nabla f(u_2) \rangle \\
    &- \frac{1}{4L} \|H (u_1 - u_2)\|^2 - \frac{L}{4} \|u_1 - u_2\|^2 \le 0. 
\end{aligned} \end{equation} 
\end{lemma}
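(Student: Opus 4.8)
The plan is to reduce \cref{eq:Lipscthiz stronger H} to the sharpened inequality \cref{eq:Lipschitz stronger} and then recognize what remains as a perfect square. To lighten the notation I would write $d = u_1 - u_2$, $g_1 = \nabla f(u_1)$, and $g_2 = \nabla f(u_2)$, so that the two $H$-dependent inner products can be expanded via $(LI \mp H)d = Ld \mp Hd$.

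First I would expand the two first-order terms, using bilinearity of $\langle\cdot,\cdot\rangle$, into
\[
    -\frac{1}{2L}\langle (LI-H)d, g_1\rangle - \frac{1}{2L}\langle (LI+H)d, g_2\rangle = -\frac{1}{2}\langle d, g_1+g_2\rangle + \frac{1}{2L}\langle Hd, g_1-g_2\rangle,
\]
so that the definition of $\psi$ becomes
\[
    \psi(u_1,u_2,H) = \bigl[f(u_1)-f(u_2)\bigr] - \frac{1}{2}\langle d, g_1+g_2\rangle + \frac{1}{2L}\langle Hd, g_1-g_2\rangle - \frac{1}{4L}\|Hd\|^2 - \frac{L}{4}\|d\|^2.
\]
Next I would invoke \cref{eq:Lipschitz stronger}, which is available since $f\in C^{1,1}_L(\R^n)$, to replace $f(u_1)-f(u_2)$ by its upper bound $\tfrac{1}{2}\langle d, g_1+g_2\rangle + \tfrac{L}{4}\|d\|^2 - \tfrac{1}{4L}\|g_1-g_2\|^2$. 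The term $\tfrac{1}{2}\langle d, g_1+g_2\rangle$ and the term $\tfrac{L}{4}\|d\|^2$ then cancel against their counterparts above, leaving
\[
    \psi(u_1,u_2,H) \le -\frac{1}{4L}\left(\|g_1-g_2\|^2 - 2\langle Hd, g_1-g_2\rangle + \|Hd\|^2\right) = -\frac{1}{4L}\|g_1-g_2-Hd\|^2 \le 0.
\]

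There is no genuinely hard step here; the only thing I would watch is the bookkeeping of the expansion and the sign conventions. The crucial observation, and the reason the particular coefficients in the definition of $\psi$ were chosen, is that the $H$-dependent cross term $\tfrac{1}{2L}\langle Hd, g_1-g_2\rangle$ together with the penalty $-\tfrac{1}{4L}\|Hd\|^2$ is exactly what is needed to complete the square $-\tfrac{1}{4L}\|g_1-g_2-Hd\|^2$ once the gradient-difference term $-\tfrac{1}{4L}\|g_1-g_2\|^2$ supplied by \cref{eq:Lipschitz stronger} is in place. As a sanity check, setting $H=0$ collapses $\psi\le 0$ back into \cref{eq:Lipschitz stronger} itself.
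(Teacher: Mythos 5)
Your proof is correct and is essentially the paper's own argument: both hinge on \cref{eq:Lipschitz stronger} and on recognizing the leftover $H$-dependent terms as the completed square $-\tfrac{1}{4L}\|\nabla f(u_1)-\nabla f(u_2)-H(u_1-u_2)\|^2$. The only difference is presentational—you substitute the bound directly while the paper phrases the same computation as a proof by contradiction—so no further comment is needed.
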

\begin{proof}
For the purpose of contradiction, assume \cref{eq:Lipscthiz stronger H} is false. Then we have 
\begin{multline*}
    - f(u_1) < - f(u_2) - \frac{1}{2L} \langle (LI+H) (u_1-u_2), \nabla f(u_1) \rangle\\ - \frac{1}{2L} \langle (LI-H)(u_1-u_2), \nabla f(u_2) \rangle - \frac{1}{4L} \|H (u_1 - u_2)\|^2 - \frac{L}{4} \|u_1 - u_2\|^2. 
\end{multline*}
Add this inequality to \cref{eq:Lipschitz stronger} and we arrive at 
\[ \frac{1}{4L} \|H (u_1 - u_2) - (\nabla f(u_1) - \nabla f(u_2))\|^2 < 0, 
\] 
which leads to contradiction. 
\end{proof}

Finally, we prove in \cref{thm:phase2} that $\langle G, H^\star \rangle /2$ is a sharp bound when $\{\mu_{ij}\}$ are all non-negative. 
\begin{theorem} \label{thm:phase2} 
Assume $f \in C^{1,1}_L(\R^n)$. 
Let $m$ be the linear function that interpolates $f$ at any set of $n+1$ affinely independent vectors $\cY = \{y_1,\dots,y_{n+1}\}\subset \R^n$. 
Let $y_0$ be any vector in $\R^n$. 
Let $G$ and $H^\star$ be matrices defined in \cref{eq:G} and \cref{eq:Hstar}. 
Let $\{\mu_{ij}\}_{i \in \cI_+,~ j \in \cI_-}$ be the set of parameters defined in \cref{lem:mu}. 
If $\mu_{ij} \ge 0$ for all $(i,j) \in \cI_+\times\cI_-$, then the function approximation error of $m$ at $y_0$ is bounded as 
\begin{equation} \label{eq:phase2}
    |m(y_0) - f(y_0)| \le \frac{1}{2} \langle G, H^\star \rangle. 
\end{equation}
\end{theorem}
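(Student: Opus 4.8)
The plan is to exploit the non-positivity of $\psi$ from \cref{lem:psi} evaluated at the pairs $(y_i,y_j)$ and weighted by the now non-negative coefficients $\mu_{ij}$. Since $\frac12\langle G,H^\star\rangle\ge 0$ and $-f\in C^{1,1}_L(\R^n)$ whenever $f\in C^{1,1}_L(\R^n)$, it suffices to establish the one-sided bound $m(y_0)-f(y_0)\le \frac12\langle G,H^\star\rangle$; applying this to $-f$ (whose interpolant is $-m$) and combining recovers the absolute value. The starting point is the inequality
\[
0 \ge \sum_{i\in\cI_+}\sum_{j\in\cI_-} \mu_{ij}\,\psi(y_i,y_j,H^\star),
\]
which holds termwise because each summand is the product of a non-negative $\mu_{ij}$ with a non-positive $\psi(y_i,y_j,H^\star)$.

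Next I would expand the right-hand side grouped by order in $f$. The zeroth-order part is $\sum_{i,j}\mu_{ij}[f(y_i)-f(y_j)]$, which collapses via \cref{eq:mu0 +} and \cref{eq:mu0 -} to $\sum_{i\in\cI_+}\ell_i f(y_i)+\sum_{j\in\cI_-}\ell_j f(y_j)=\sum_{i=0}^{n+1}\ell_i f(y_i)=m(y_0)-f(y_0)$ by \cref{eq:Lagrange m}. The essential cancellation occurs in the first-order part: for each fixed $i\in\cI_+$ the coefficient multiplying $\nabla f(y_i)$ is $(LI-H^\star)\sum_{j\in\cI_-}\mu_{ij}(y_i-y_j)$, which vanishes because $\sum_{j}\mu_{ij}=\ell_i$ together with \cref{eq:mu1 +} gives $(LI-H^\star)\sum_j\mu_{ij}y_j=(LI-H^\star)\ell_i y_i$; symmetrically the coefficient of each $\nabla f(y_j)$ vanishes by \cref{eq:mu1 -}. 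Hence the gradient contribution is identically zero, and the inequality reduces to $m(y_0)-f(y_0)\le R$ with
\[
R = \sum_{i\in\cI_+}\sum_{j\in\cI_-}\mu_{ij}\Big[\frac{1}{4L}\|H^\star(y_i-y_j)\|^2+\frac{L}{4}\|y_i-y_j\|^2\Big].
\]

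It then remains to identify $R=\frac12\langle G,H^\star\rangle$, which is the main computational obstacle. The key geometric observation is that every difference $y_i-y_j$ with $i,j\in\cI_+\cup\cI_-$ lies in $\operatorname{range}(G)$: by \cref{eq:G recenter}, $\operatorname{range}(G)$ is contained in the difference space $D=\operatorname{span}\{y_i-y_j\}$, while \cref{lem:Sylvester} gives $\operatorname{rank}(G)=|\cI_+|+|\cI_-|-2$, which equals $\dim D$ because $y_0$ is an affine combination of the remaining relevant points (by \cref{eq:Lagrange 0} and \cref{eq:Lagrange Y}) and $\cY$ is affinely independent. On this subspace $H^\star=L(P_+P_+^T-P_-P_-^T)$ satisfies $\|H^\star v\|=L\|v\|$, so each bracket equals $\tfrac{L}{2}\big(\|P_+^T(y_i-y_j)\|^2+\|P_-^T(y_i-y_j)\|^2\big)$, and I would write $R=\tfrac{L}{2}(R_++R_-)$ accordingly. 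To evaluate $R_-=\sum_{i,j}\mu_{ij}\|P_-^T(y_i-y_j)\|^2$, I would apply \cref{eq:mu1 +} in the recentered form $P_-^T\sum_j\mu_{ij}(y_j-y_0)=\ell_i P_-^T(y_i-y_0)$ to collapse the cross term, obtaining $R_-=-\sum_{i=0}^{n+1}\ell_i\|P_-^T(y_i-y_0)\|^2=-\langle P_-P_-^T,G\rangle=-\operatorname{trace}(\Lambda_-)$; symmetrically \cref{eq:mu1 -} yields $R_+=\langle P_+P_+^T,G\rangle=\operatorname{trace}(\Lambda_+)$. Combining and using \cref{eq:Hstar+-} and \cref{eq:G+-} gives $R=\tfrac{L}{2}\big(\operatorname{trace}(\Lambda_+)-\operatorname{trace}(\Lambda_-)\big)=\tfrac12\langle G,H^\star\rangle$, which closes the one-sided bound.

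I expect the delicate step to be the evaluation of $R$. The cancellation of the gradient terms is exactly what the construction of $\{\mu_{ij}\}$ in \cref{lem:mu} was designed to produce, but pinning the residual constant to $\tfrac12\langle G,H^\star\rangle$ hinges on the range argument above: it is what guarantees the kernel of $H^\star$ contributes nothing and licenses the factorizations $LI\mp H^\star=2LP_{\mp}P_{\mp}^T$ on the difference vectors, exactly as in the proof of \cref{lem:mu}.
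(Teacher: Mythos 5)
Your proposal is correct and follows essentially the same route as the paper: the weighted sum $\sum_{i,j}\mu_{ij}\psi(y_i,y_j,H^\star)\le 0$, the collapse of the zeroth-order terms to $m(y_0)-f(y_0)$ via \cref{eq:mu0 +} and \cref{eq:mu0 -}, and the cancellation of the gradient terms via \cref{eq:mu1 +} and \cref{eq:mu1 -}. The only divergence is in evaluating the residual constant: the paper invokes $H^{\star T}H^\star=L^2I$ to merge the two quadratic terms and then manipulates inner products directly, whereas you split along $P_+$ and $P_-$ and reduce to $\operatorname{trace}(\Lambda_+)-\operatorname{trace}(\Lambda_-)$; your range-of-$G$ argument is in fact slightly more careful, since $H^{\star T}H^\star=L^2I$ fails globally when $G$ is singular (some $\ell_i=0$) but, as you show, holds on the span of the relevant differences $y_i-y_j$, which is all that is needed.
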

\begin{proof}
We only provide the proof for the case when $m(y_0) - f(y_0) \ge 0$. 
When $\mu_{ij} \ge 0$ for all $(i,j) \in \cI_+\times\cI_-$, the following inequality holds 
\begin{equation} \label{eq:phase2 summation}
    \sum_{i \in \cI_+} \sum_{j \in \cI_-} \mu_{ij} \psi(y_i,y_j,H^\star) \stackrel{\cref{eq:Lipscthiz stronger H}}{\le} 0. 
\end{equation}
The zeroth-order term in the summation \cref{eq:phase2 summation} is 
\[ \begin{aligned}
    \sum_{i \in \cI_+} \sum_{j \in \cI_-} \mu_{ij} (f(y_i) - f(y_j)) 
    &= \left[\sum_{i \in \cI_+} \sum_{j \in \cI_-} \mu_{ij} f(y_i)\right] - \left[\sum_{i \in \cI_+} \sum_{j \in \cI_-} \mu_{ij} f(y_j)\right] \\
    &\leftstackrel{\cref{eq:mu0 +}\cref{eq:mu0 -}}{=} \left[ \sum_{i \in \cI_+} \ell_i f(y_i)\right] + \left[\sum_{j \in \cI_-} \ell_j f(y_j)\right] \\
    &\leftstackrel{\cref{eq:Lagrange m}}{=} m(y_0) - f(y_0). 
\end{aligned} \] 
The sum of the first-order terms is $-1/(2L)$ multiplies
\[ \begin{aligned} 
    &\sum_{i \in \cI_+} \sum_{j\in\cI_-} \mu_{ij} \big( \langle(LI-H^\star)(y_i-y_j), \nabla f(y_i) \rangle + \langle (LI+H^\star)(y_i-y_j), \nabla f(y_j)\rangle \big) \\
    &= \left[\sum_{i \in \cI_+} \sum_{j\in\cI_-} \mu_{ij} \langle(LI-H^\star)y_i, \nabla f(y_i) \rangle \right] 
    - \left[\sum_{i \in \cI_+} \sum_{j\in\cI_-} \mu_{ij} \langle (LI+H^\star)y_j, \nabla f(y_j)\rangle \right] \\
    &\quad -\left[\sum_{i \in \cI_+} \sum_{j\in\cI_-} \mu_{ij}  \langle(LI-H^\star)y_j, \nabla f(y_i) \rangle \right] + \left[\sum_{i \in \cI_+} \sum_{j\in\cI_-} \mu_{ij} \langle(LI+H^\star)y_i, \nabla f(y_j) \rangle \right] \\
    &= \left[\sum_{i \in \cI_+} \ell_i \langle(LI-H^\star)y_i, \nabla f(y_i) \rangle \right] 
    + \left[\sum_{j\in\cI_-} \ell_j \langle (LI+H^\star)y_j, \nabla f(y_j)\rangle \right] \\
    &\quad -\left[\sum_{i \in \cI_+} \ell_i \langle(LI-H^\star)y_i, \nabla f(y_i) \rangle \right] - \left[\sum_{j\in\cI_-} \ell_j \langle(LI+H^\star)y_j, \nabla f(y_j) \rangle \right] 
    = \mathbf{0}, 
\end{aligned} \]
where the second equality holds because of \cref{eq:mu0 +}, \cref{eq:mu0 -}, \cref{eq:mu1 +}, and \cref{eq:mu1 -} respectively for the four terms. 
Notice $H^{\star T} H^\star = L^2I$. 
The constant term in the summation \cref{eq:phase2 summation} is $-1/2$ multiplies
\[ \begin{aligned} 
    &\sum_{i \in \cI_+} \sum_{j\in\cI_-} \mu_{ij} \left(\frac{1}{2L}\|H^\star(y_i-y_j)\|^2 + \frac{L}{2} \|y_i-y_j\|^2\right) \\
    &= L \left[ \sum_{i \in \cI_+} \sum_{j\in\cI_-} \mu_{ij} \langle y_i-y_j, y_i\rangle \right] - L \left[ \sum_{i \in \cI_+} \sum_{j\in\cI_-} \mu_{ij} \langle y_i-y_j, y_j\rangle \right] \\
    &\stackrel{\mathmakebox[\widthof{=}]{\scriptsize \begin{array}{c}\cref{eq:mu0 +}\\\cref{eq:mu0 -}\end{array}}}{=} \left[ \sum_{i \in \cI_+} \langle L \left(\ell_i y_i- \sum_{j\in\cI_-} \mu_{ij} y_j\right), y_i\rangle \right] - \left[ \sum_{j\in\cI_-} \langle L \left(\sum_{i \in \cI_+} \mu_{ij} y_i + \ell_j y_j\right), y_j\rangle \right] \\
    &\leftstackrel{\mathmakebox[\widthof{=}]{\scriptsize \begin{array}{c}\cref{eq:mu1 +}\\ \cref{eq:mu1 -}\end{array}}}{=} \left[ \sum_{i \in \cI_+} \langle H^\star \left(\ell_i y_i- \sum_{j\in\cI_-} \mu_{ij} y_j\right), y_i\rangle \right] + \left[ \sum_{j\in\cI_-} \langle H^\star \left(\sum_{i \in \cI_+} \mu_{ij} y_i + \ell_j y_j\right), y_j\rangle \right] \\
    &= \left[ \sum_{i \in \cI_+} \ell_i \langle H^\star y_i, y_i\rangle \right] + \left[ \sum_{j\in\cI_-} \ell_j \langle H^\star y_j y_j\rangle \right] 
    = \langle G, H^\star \rangle. 
\end{aligned} \]
Thus the summation \cref{eq:phase2 summation} is \cref{eq:phase2} when $m(y_0) - f(y_0) \ge 0$. 
\end{proof}

\cref{thm:phase2} proved that \cref{eq:phase2} is a sharp bound under the condition $\mu_{ij} \ge 0$ for all $(i,j) \in \cI_+ \times \cI_-$, but the geometric meaning of this condition is obscure. 
We numerically generated many different $\cY$ and $y_0$ with various $n$ and calculated the parameters $\{\mu_{ij}\}$. 
Based on our observation, we believe the following statements are true. 
\begin{enumerate}
    \item When there is no obtuse angle at the vertices of the simplex $\conv(\cY)$, that is, when 
    \begin{equation} \label{eq:acute simplex} 
        \langle y_j-y_i, y_k-y_i \rangle \ge 0 \text{ for all } (i,j,k) \in \{1,2,\dots,n+1\}^3, 
    \end{equation}
    the parameters $\{\mu_{ij}\}_{i\in\cI_+, j\in\cI_-}$ are all non-negative for any $y_0 \in \R^n$ and \cref{eq:phase2} is therefore a sharp bound. 
    \item If there is at least one obtuse angle at the vertices of the simplex $\conv(\cY)$, then there is a non-empty subset of $\R^n$ to which if $y_0$ belongs, there is at least one negative element in $\{\mu_{ij}\}_{i\in\cI_+, j\in\cI_-}$ and the bound \cref{eq:phase2} is invalid as a function $f \in C_L^{1,1}(\R^n)$ with a larger approximation error exists. 
    \item For bivariate interpolation and extrapolation, the bound \cref{eq:phase2} is invalid if and only if the simplex formed by $\cY$ is an obtuse triangle and $y_0$ is inside what is indicated by the four shaded areas in \cref{fig:phase2}. 
    These shaded areas are open subsets of $\R^2$ and do not include their boundaries. 
\end{enumerate}
We cannot prove these statements mathematically, but we can partly show the third statement is true by deriving in \cref{sec:phase3} the sharp error bound when $y_0$ is in shaded areas in \cref{fig:phase2}.

\begin{figure}[tbhp]
  \centering
    \resizebox{0.4\textwidth}{!}{\begin{tikzpicture}
\filldraw[black] (0,0) circle (2pt) node[anchor=north west] {$y_1$}; 
\filldraw[black] (2,1.8) circle (2pt) node[anchor=south east] {$y_2$}; 
\filldraw[black] (-2,0) circle (2pt) node[anchor=south] {$y_3$}; 

\fill[color=gray, opacity=0.7] (0,0) -- (2,1.8) -- (2,0); 
\fill[color=gray, opacity=0.7] (0,0) -- (-2,0) -- (-1.1050, -0.9945); 
\shade[left color=gray, right color=white, shading angle=156] (2,3.5) -- (2,1.8) -- (3.333,3);
\shade[left color=gray, right color=white, shading angle=-120] (-3.7,0) -- (-2,0) -- (-3.35,1.5);

\draw[thick] (-2,-1.8) -- (3.333,3);
\draw[thick] (-3,-0.45) -- (4,2.7);
\draw[thick] (-3.7,0) -- (3.333,0);
\draw[thick, dashed] (2,-0.5) -- (2,3.5); 
\draw[thick, dashed] (-3.35,1.5) -- (-0.2,-2); 

\end{tikzpicture}}
  \caption{The areas to which if $y_0$ belongs, \cref{eq:phase2} is invalid for bivariate extrapolation. 
  The dashed line on the left is perpendicular to the line going through $y_1$ and $y_2$; and the one on the right is perpendicular to the line going through $y_3$ and $y_1$. } 
  \label{fig:phase2}
\end{figure}
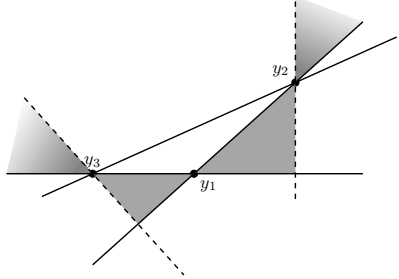

\section{Sharp Error Bounds for Bivariate Extrapolation} \label{sec:phase3}
Because of the symmetries between the two shaded triangles and the two shaded cones in \cref{fig:phase2}, we only need to derive the error bounds for the two cases indicated in \cref{fig:phase3}. 
We first investigate the case in \cref{fig:phase3 triangle}, which can be defined mathematically under barycentric coordinate system as $\ell_2>0, \ell_3<0$, and $\ell_1\langle y_2-y_1,y_3-y_1\rangle - \ell_3\langle y_2-y_3, y_1-y_3\rangle < 0$, and then argue that the case in \cref{fig:phase3 cone} is analogous to the case in \cref{fig:phase3 triangle}.
The following lemma shows the point $w$, as defined in \cref{eq:phase3 w}, is the intersection of the line going through $y_1$ and $y_3$ and the line going through $y_0$ and $y_2$. 

\begin{figure}[tbhp]
     \centering
     \subfloat[\raggedright when $y_0$ is in the open triangle such that $\ell_2>0, \ell_3<0$, and $\ell_1\langle y_2-y_1,y_3-y_1\rangle - \ell_3\langle y_2-y_3, y_1-y_3\rangle < 0$]{\label{fig:phase3 triangle}\resizebox{0.4\linewidth}{!}{\begin{tikzpicture}
\filldraw[black] (0,0) circle (2pt) node[anchor=north east] {$y_1$}; 
\filldraw[black] (2,1.8) circle (2pt) node[anchor=south east] {$y_2$}; 
\filldraw[black] (-2,0) circle (2pt) node[anchor=south] {$y_3$};  
\filldraw[black] (0.9,0) circle (2pt) node[anchor=north west] {$w$}; 
\filldraw[black] (1.3889,0.8) circle (2pt) node[anchor=north west] {$y_0$}; 

\draw[thick] (0,0) -- (3.333,3);
\draw[thick] (2,1.8) -- (-2,0);
\draw[thick] (-2,0) -- (3.333,0);
\draw[thick, dashed] (2,-0.5) -- (2,3.5);
\draw[thick, dashed] (2,1.8) -- (0.9,0);

\end{tikzpicture}}}
     \hspace{0.1\linewidth}
     \subfloat[\raggedright when $y_0$ is in the open cone such that $l_3>0$ and $\ell_1\langle y_2-y_1,y_3-y_1\rangle - \ell_3\langle y_2-y_3, y_1-y_3\rangle > 0$]{\label{fig:phase3 cone}\resizebox{0.4\linewidth}{!}{ \begin{tikzpicture}
\filldraw[black] (0,0) circle (2pt) node[anchor=north east] {$y_1$}; 
\filldraw[black] (2,1.8) circle (2pt) node[anchor=south east] {$y_2$}; 
\filldraw[black] (-2,0) circle (2pt) node[anchor=south] {$y_3$};  
\filldraw[black] (0.9,0) circle (2pt) node[anchor=north west] {$w$}; 
\filldraw[black] (2.4278,2.5) circle (2pt) node[anchor=south west] {$y_0$}; 

\draw[thick] (0,0) -- (3.333,3);
\draw[thick] (2,1.8) -- (-2,0);
\draw[thick] (-2,0) -- (3.333,0);
\draw[thick, dashed] (2,-0.5) -- (2,3.5);
\draw[thick, dashed] (2.4278,2.5) -- (0.9,0);

\end{tikzpicture}}}
    \caption{Two configurations of $\cY$ and $y_0$ where \cref{eq:phase2} is invalid for bivariate extrapolation.}
    \label{fig:phase3}
\end{figure}
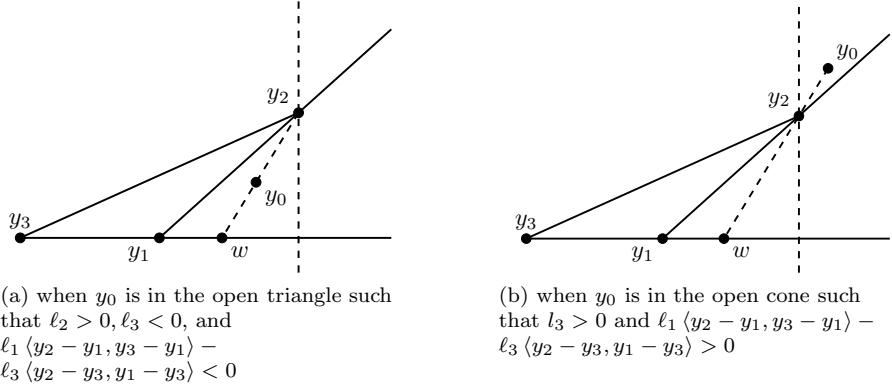

\begin{lemma}
Assume $-\ell_0-\ell_2 \stackrel{\cref{eq:Lagrange 0}}{=} \ell_1+\ell_3\neq 0$ for some affinely independent $\cY\subset\R^2$ and $y_0\in\R^2$. 
Let 
\begin{equation} \label{eq:phase3 w}
    w = \frac{-\ell_0y_0+\ell_1y_1-\ell_2y_2+\ell_3y_3}{-\ell_0+\ell_1-\ell_2+\ell_3}. 
\end{equation}
Then 
\[ w = \frac{\ell_1y_1+\ell_3y_3}{\ell_1+\ell_3} = \frac{\ell_0y_0+\ell_2y_2}{\ell_0+\ell_2}, 
\]
and 
\begin{subequations} \label{eq:phase3 w y} \begin{align}
    \ell_0(y_0-w) + \ell_2(y_2-w) &= 0, \\
    \ell_1(y_1-w) + \ell_3(y_3-w) &= 0. 
\end{align} \end{subequations}
\end{lemma}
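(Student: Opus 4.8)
The statement is an identity, so the plan is purely algebraic: I would derive the two alternative forms of $w$ from the defining formula \cref{eq:phase3 w} using only the barycentric identities \cref{eq:Lagrange 0} and \cref{eq:Lagrange Y}, and then obtain \cref{eq:phase3 w y} by clearing denominators. No geometry is needed for the identity itself; the geometric interpretation stated in the surrounding text is a separate remark.

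First I would rewrite the denominator of \cref{eq:phase3 w}. Grouping the indices into the pair $\{0,2\}$ and the pair $\{1,3\}$, the identity \cref{eq:Lagrange 0} gives $-\ell_0 - \ell_2 = \ell_1 + \ell_3$, so
\[ -\ell_0 + \ell_1 - \ell_2 + \ell_3 = 2(\ell_1 + \ell_3) = -2(\ell_0 + \ell_2); \]
the hypothesis $\ell_1 + \ell_3 \neq 0$ then guarantees this is nonzero and that all the fractions below are well-defined. Applying the same grouping to the numerator, now using \cref{eq:Lagrange Y} in the form $-\ell_0 y_0 - \ell_2 y_2 = \ell_1 y_1 + \ell_3 y_3$, gives
\[ -\ell_0 y_0 + \ell_1 y_1 - \ell_2 y_2 + \ell_3 y_3 = 2(\ell_1 y_1 + \ell_3 y_3) = -2(\ell_0 y_0 + \ell_2 y_2). \]

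Dividing the numerator by the denominator and cancelling the common factor then produces both claimed expressions at once: the first grouping yields $w = (\ell_1 y_1 + \ell_3 y_3)/(\ell_1 + \ell_3)$ and the second yields $w = (\ell_0 y_0 + \ell_2 y_2)/(\ell_0 + \ell_2)$. Finally, multiplying each of these by its own denominator and rearranging gives $\ell_1(y_1 - w) + \ell_3(y_3 - w) = 0$ and $\ell_0(y_0 - w) + \ell_2(y_2 - w) = 0$, which are exactly the two parts of \cref{eq:phase3 w y}.

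There is no substantial obstacle in this argument; the one point worth flagging is the \emph{doubling} observation — that the four-term numerator and denominator of \cref{eq:phase3 w} each split into two halves that \cref{eq:Lagrange 0} and \cref{eq:Lagrange Y} identify with one another. This is precisely what allows the single symmetric formula \cref{eq:phase3 w} to coincide simultaneously with both of the simpler two-term ratios, and it explains why the definition was written in this seemingly redundant form rather than as either ratio directly.
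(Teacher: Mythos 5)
Your argument is correct and is exactly the computation the paper compresses into its one-line proof (``these equalities are direct results of \cref{eq:Lagrange 0,eq:Lagrange Y}''): the grouping of indices into $\{0,2\}$ and $\{1,3\}$, the doubling of numerator and denominator, and the cancellation are the intended derivation. No issues.
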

\begin{proof}
These equalities are direct results of \cref{eq:Lagrange 0,eq:Lagrange Y}. 
\end{proof}

We define in the following lemma an $H^\star$, which is different from the one defined in \cref{eq:Hstar} and is asymmetric. 
\begin{lemma}
Assume for some affinely independent $\cY\subset\R^2$ and $y_0\in\R^2$ that $\ell_2>0, \ell_3<0$, and $\ell_1\langle y_2-y_1,y_3-y_1\rangle - \ell_3\langle y_2-y_3, y_1-y_3\rangle < 0$. 
Let 
\begin{equation} \label{eq:Hstar 3}
    H^\star = P \begin{bmatrix} +L &0\\ 0 &-L \end{bmatrix} P^{-1} \text{ with } P = \begin{bmatrix} y_2-y_0 &y_1-y_3 \end{bmatrix}. 
\end{equation}
Let $w$ be defined as \cref{eq:phase3 w}. 
Then 
\begin{equation} \label{eq:phase3 Hstar eigvector} \begin{aligned} 
    H^\star(y_i-w) &= L(y_i-w) \text{ for } i\in\{0,2\}, \\
    H^\star(y_i-w) &= -L(y_i-w) \text{ for } i\in\{1,3\}. 
\end{aligned} \end{equation} 
\end{lemma}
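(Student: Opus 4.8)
The plan is to read off the eigenstructure of $H^\star$ directly from \cref{eq:Hstar 3}: since $H^\star = P\,\mathrm{diag}(+L,-L)\,P^{-1}$ with $P = \begin{bmatrix} y_2-y_0 & y_1-y_3\end{bmatrix}$, the first column $y_2-y_0$ is an eigenvector of $H^\star$ with eigenvalue $+L$ and the second column $y_1-y_3$ is an eigenvector with eigenvalue $-L$. It therefore suffices to show that each of the four vectors $y_i-w$ lies along the appropriate eigendirection, i.e.\ that $y_0-w$ and $y_2-w$ are scalar multiples of $y_2-y_0$, while $y_1-w$ and $y_3-w$ are scalar multiples of $y_1-y_3$. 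Once this is in hand, applying $H^\star$ and pulling the scalar through the linear map yields \cref{eq:phase3 Hstar eigvector} at once.

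First I would record that $P$ is invertible, so that $H^\star$ and its eigendecomposition are well defined. This is equivalent to $y_2-y_0$ and $y_1-y_3$ being linearly independent, i.e.\ to the line through $y_0,y_2$ and the line through $y_1,y_3$ being non-parallel, which is precisely what the preceding lemma guarantees: under $\ell_1+\ell_3\neq0$ those two lines meet in the single point $w$. (The same hypothesis makes $\ell_0+\ell_2=-(\ell_1+\ell_3)\neq0$ by \cref{eq:Lagrange 0}, so the denominators appearing below are nonzero.)

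Next I would produce the proportionalities from the preceding lemma. From $w=(\ell_0y_0+\ell_2y_2)/(\ell_0+\ell_2)$ one gets $y_0-w=-\frac{\ell_2}{\ell_0+\ell_2}(y_2-y_0)$ and $y_2-w=\frac{\ell_0}{\ell_0+\ell_2}(y_2-y_0)$; from $w=(\ell_1y_1+\ell_3y_3)/(\ell_1+\ell_3)$ one gets $y_1-w=\frac{\ell_3}{\ell_1+\ell_3}(y_1-y_3)$ and $y_3-w=-\frac{\ell_1}{\ell_1+\ell_3}(y_1-y_3)$. (Equivalently, these are immediate from \cref{eq:phase3 w y} together with the fact that $w$ lies on both lines.) Then $H^\star(y_0-w)=-\frac{\ell_2}{\ell_0+\ell_2}H^\star(y_2-y_0)=-\frac{\ell_2}{\ell_0+\ell_2}L(y_2-y_0)=L(y_0-w)$, and the same computation handles $y_2-w$; the two vectors $y_1-w,y_3-w$ are multiples of the $-L$ eigenvector and give the second line of \cref{eq:phase3 Hstar eigvector}.

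I do not expect a genuinely hard step: the statement is essentially a geometric tautology once the eigenstructure of $H^\star$ is extracted from \cref{eq:Hstar 3}. The only points requiring care are the bookkeeping of \emph{which} eigenvalue attaches to which line (the $+L$ direction is the $y_0,y_2$ line and the $-L$ direction the $y_1,y_3$ line) and the mild check that $P^{-1}$ exists. Notably, the angle condition $\ell_1\langle y_2-y_1,y_3-y_1\rangle-\ell_3\langle y_2-y_3,y_1-y_3\rangle<0$ plays no role in this lemma; it will presumably enter later, when verifying the sign conditions that single out this asymmetric $H^\star$ as the worst case.
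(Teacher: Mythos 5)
Your proposal is correct and follows essentially the same route as the paper: read off from \cref{eq:Hstar 3} that $y_2-y_0$ and $y_1-y_3$ are eigenvectors of $H^\star$ with eigenvalues $+L$ and $-L$, check $P$ is invertible, and use the preceding lemma to see that each $y_i-w$ is a scalar multiple of the appropriate eigenvector. Your version is in fact slightly more explicit than the paper's (which just asserts the parallelism and the invertibility of $P$ from the figure), since you write out the proportionality constants.
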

\begin{proof}
It is clear from \cref{fig:phase3 triangle} that the assumption guarantees the invertibility of $P$ and $-\ell_0-\ell_2 = \ell_1+\ell_3\neq 0$. 
Notice by the definition of $H^\star$, we have $H^\star(y_2-y_0) = L(y_2-y_0)$ and $H^\star(y_1-y_3) = -L(y_1-y_3)$.
The lemma holds true because $y_i-w$ is parallel to $y_2-y_0$ for $i\in\{0,2\}$ and to $y_1-y_3$ for $i\in\{1,3\}$. 
\end{proof}

Now we are ready to show $\langle G, H^\star \rangle/2$, with $H^\star$ defined in \cref{eq:Hstar 3}, is an upper bound on the function approximation error in the case in \cref{fig:phase3 triangle}. 
\begin{theorem} \label{thm:phase3}
Assume $f \in C^{1,1}_L(\R^2)$. 
Let $m$ be the linear function that interpolates $f$ at any set of three affinely independent vectors $\cY = \{y_1,y_2,y_3\}\subset \R^2$ such that $\langle y_2-y_1, y_3-y_1 \rangle < 0$. 
Let $y_0$ be any vector in $\R^2$ such that its barycentric coordinates satisfies $\ell_2>0, \ell_3<0$, and $\ell_1\langle y_2-y_1,y_3-y_1\rangle - \ell_3\langle y_2-y_3, y_1-y_3\rangle < 0$. 
Let $G$ and $H^\star$ be the matrices defined in \cref{eq:G} and \cref{eq:Hstar 3}. 
Then the function approximation error of $m$ at $y_0$ is bounded as 
\begin{equation} \label{eq:phase3}
    |m(y_0) - f(y_0)| \le \frac{1}{2} \langle G, H^\star \rangle. 
\end{equation}
\end{theorem}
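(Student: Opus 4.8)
The plan is to follow the proof of \cref{thm:phase2} almost verbatim, the only structural change being that the symmetric $H^\star$ is replaced by the asymmetric one in \cref{eq:Hstar 3}. Since \cref{lem:psi} is proved for an \emph{arbitrary} matrix $H$, the inequality $\psi(y_i,y_j,H^\star)\le 0$ is still available, so it suffices to find nonnegative multipliers $\{\mu_{ij}\}$ for which the combination $\sum_{i\in\cI_+}\sum_{j\in\cI_-}\mu_{ij}\psi(y_i,y_j,H^\star)\le 0$ reproduces the three structural identities of that proof. First I would pin down the signs of $\ell$. The hypotheses give $\ell_0=-1<0$, $\ell_2>0$, and $\ell_3<0$. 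The angle condition is the key: writing $\langle y_2-y_3,y_1-y_3\rangle=\|y_3-y_1\|^2-\langle y_2-y_1,y_3-y_1\rangle$, the inequality $\ell_1\langle y_2-y_1,y_3-y_1\rangle-\ell_3\langle y_2-y_3,y_1-y_3\rangle<0$ rearranges to $(\ell_1+\ell_3)\langle y_2-y_1,y_3-y_1\rangle<\ell_3\|y_3-y_1\|^2<0$, and since $\langle y_2-y_1,y_3-y_1\rangle<0$ this forces $\ell_1+\ell_3>0$ (hence $\ell_1>0$). Thus $\cI_+=\{1,2\}$, $\cI_-=\{0,3\}$, and the relevant pairs are $(1,0),(1,3),(2,0),(2,3)$.

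Next I would exhibit the multipliers explicitly,
\[
\mu_{20}=\ell_2,\qquad \mu_{13}=-\ell_3,\qquad \mu_{10}=\ell_1+\ell_3,\qquad \mu_{23}=0,
\]
all nonnegative by the signs just established, and verify that they satisfy the four relations \cref{eq:mu0 +,eq:mu0 -,eq:mu1 +,eq:mu1 -} with the new $H^\star$. The two sum relations follow at once from $\ell_1+\ell_2+\ell_3=1$. For the first-order relations I would rewrite each difference as $\ell_iy_i-\sum_j\mu_{ij}y_j=\sum_j\mu_{ij}(y_i-y_j)$ in terms of the vectors $y_k-w$ and invoke the eigenvector identities \cref{eq:phase3 Hstar eigvector}: the operator $LI-H^\star$ kills the $+L$-direction $\mathrm{span}(y_2-y_0)$ and the operator $LI+H^\star$ kills the $-L$-direction $\mathrm{span}(y_1-y_3)$; together with the balance relations \cref{eq:phase3 w y} every term collapses to zero. (The demand that $LI-H^\star$ annihilate the $(2,3)$ contribution is exactly what forces $\mu_{23}=0$.)

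With the relations in hand, the summation splits as in \cref{thm:phase2}. The zeroth-order part telescopes through the sum relations and \cref{eq:Lagrange m} to $m(y_0)-f(y_0)$, and the first-order part cancels by \cref{eq:mu1 +,eq:mu1 -}. The one genuinely new difficulty is the constant term: because $H^\star$ is no longer normal, $H^{\star T}H^\star\neq L^2I$, so the shortcut $\tfrac1{2L}\|H^\star v\|^2=\tfrac L2\|v\|^2$ used in \cref{thm:phase2} is unavailable. This is the step I expect to be the main obstacle, and it must be handled through the spectral structure of $H^\star$ rather than by a scalar identity.

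To resolve it I would compute both sides in the $\{y_k-w\}$ coordinates. Using \cref{eq:G recenter} to write $G=\sum_k\ell_k(y_k-w)(y_k-w)^T$ and \cref{eq:phase3 Hstar eigvector}, one gets $\langle G,H^\star\rangle=L\big(-\|y_0-w\|^2+\ell_2\|y_2-w\|^2-\ell_1\|y_1-w\|^2-\ell_3\|y_3-w\|^2\big)$. In the matching constant sum $\sum_{ij}\mu_{ij}\big(\tfrac1{2L}\|H^\star(y_i-y_j)\|^2+\tfrac L2\|y_i-y_j\|^2\big)$, the pure-eigenvector pairs $(2,0)$ and $(1,3)$ each reduce to $L\|y_i-y_j\|^2$ as before, while for the mixed pair $(1,0)$ I would use $H^\star(y_1-y_0)=-L\big((y_1-w)+(y_0-w)\big)$ and the parallelogram law to obtain $L\big(\|y_1-w\|^2+\|y_0-w\|^2\big)$. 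Finally, substituting the antiparallel relations from \cref{eq:phase3 w y}, namely $y_0-w=\ell_2(y_2-w)$ and $y_1-w=-(\ell_3/\ell_1)(y_3-w)$, and using $1-\ell_2=\ell_1+\ell_3$, both expressions collapse to the same combination of $\|y_2-w\|^2$ and $\|y_3-w\|^2$, so the constant sum equals $\langle G,H^\star\rangle$. Hence $\sum_{ij}\mu_{ij}\psi(y_i,y_j,H^\star)=\big(m(y_0)-f(y_0)\big)-\tfrac12\langle G,H^\star\rangle\le 0$, which is \cref{eq:phase3}; applying this to $-f$ (legitimate since $G$ and $H^\star$ do not depend on $f$) yields the bound for $|m(y_0)-f(y_0)|$, as in \cref{thm:phase2}.
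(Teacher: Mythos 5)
Your proposal is correct and follows essentially the same route as the paper: the multipliers $\mu_{10}=1-\ell_2$, $\mu_{20}=\ell_2$, $\mu_{13}=-\ell_3$, $\mu_{23}=0$ are exactly the coefficients the paper places on $\psi(y_1,y_0,H^\star)$, $\psi(y_2,y_0,H^\star)$, $\psi(y_1,y_3,H^\star)$ in \cref{eq:phase3 sum}, your sign argument for $\ell_1+\ell_3>0$ is the paper's argument for $1-\ell_2>0$, and your treatment of the constant term via \cref{eq:phase3 Hstar eigvector}, the parallelogram law for the mixed pair $(1,0)$, and the collinearity relations \cref{eq:phase3 w y} reproduces the paper's computation. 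The only difference is presentational: you organize the cancellations as a verification of \cref{eq:mu0 +,eq:mu0 -,eq:mu1 +,eq:mu1 -} for the new $H^\star$ (which the paper itself notes, in \cref{sec:discussion}, is valid for exactly these $\{\mu_{ij}\}$), whereas the paper carries out the sums directly.
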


\begin{proof}
We only provide the proof for the case when $m(y_0)-f(y_0) \ge 0$. 
We use the function $\psi$ defined in \cref{eq:Lipscthiz stronger H} again. 
Since $\ell_3<0$, $\langle y_2-y_1, y_3-y_1 \rangle < 0$, and 
\[ \begin{aligned}
    0 &> \ell_1\langle y_2-y_1,y_3-y_1\rangle - \ell_3\langle y_2-y_3, y_1-y_3\rangle \\
    &\leftstackrel{\cref{eq:Lagrange 0}}{=}  (1-\ell_2-\ell_3) \langle y_2-y_1,y_3-y_1\rangle - \ell_3\langle y_2-y_3, y_1-y_3\rangle \\
    &= (1-\ell_2) \langle y_2-y_1,y_3-y_1\rangle - \ell_3 \|y_1-y_3\|^2, 
\end{aligned} \] 
we have $1-\ell_2 > 0$, and thus the following inequalities hold: 
\begin{subequations} \label{eq:phase3 sum} \begin{align}
    (1-\ell_2) \psi(y_1, y_0, H^\star) &\le 0, \\
    \ell_2 \psi(y_2, y_0, H^\star) &\le 0, \\ 
    -\ell_3 \psi(y_1, y_3, H^\star) &\le 0. 
\end{align} \end{subequations}
Similar to the previous proofs, we add these inequalities together. 
The sum of their zeroth-order terms is 
\[ \begin{aligned} 
&\hspace{-1em} (1-\ell_2) (f(y_1) - f(y_0)) + \ell_2 (f(y_2) - f(y_0)) - \ell_3 (f(y_1) - f(y_3)) \\
&= (1-\ell_2-\ell_3) f(y_1) + \ell_2 f(y_2) + \ell_3 f(y_3) - f(y_0) 
\stackrel{\cref{eq:Lagrange m}\cref{eq:Lagrange 0}}{=} m(y_0) - f(y_0). 
\end{aligned} \]
The sum of their first-order terms is $-1/(2L)$ multiplies 
\[ \begin{aligned} 
&\hspace{-2em} (1-\ell_2) \left[ \langle (LI-H^\star) (y_1-y_0), \nabla f(y_1) \rangle + \langle (LI+H^\star) (y_1-y_0), \nabla f(y_0) \rangle \right] \\
&\hspace{-2em} + \ell_2 \left[ \langle (LI-H^\star) (y_2-y_0), \nabla f(y_2) \rangle + \langle (LI+H^\star) (y_2-y_0), \nabla f(y_0) \rangle \right] \\
&\hspace{-2em} -\ell_3 \left[ \langle (LI-H^\star) (y_1-y_3), \nabla f(y_1) \rangle + \langle (LI+H^\star) (y_1-y_3), \nabla f(y_3) \rangle \right] \\
&= \langle (LI-H^\star) [(1-\ell_2)(y_1-y_0) - \ell_3(y_1-y_3)], \nabla f(y_1) \rangle \\
&\quad + \ell_2 \langle (LI-H^\star) (y_2-y_0), \nabla f(y_2) \rangle \\
&\quad - \ell_3 \langle (LI+H^\star) (y_1-y_3), \nabla f(y_3) \rangle \\
&\quad + \langle (LI+H^\star) [(1-\ell_2)(y_1-y_0) + \ell_2(y_2-y_0)], \nabla f(y_0) \rangle \\ 
&\leftstackrel{\cref{eq:Lagrange 0}\cref{eq:Lagrange Y}}{=} \ell_2 \langle (LI-H^\star) (y_0 - y_2), \nabla f(y_1) \rangle + \ell_2 \langle (LI-H^\star) (y_2-y_0), \nabla f(y_2) \rangle \\
&\quad - \ell_3 \langle (LI+H^\star) (y_1-y_3), \nabla f(y_3) \rangle + \ell_3 \langle (LI+H^\star) (y_1 - y_3), \nabla f(y_0) \rangle \\ 
&\leftstackrel{\cref{eq:phase3 Hstar eigvector}}{=} \mathbf{0}. 
\end{aligned} \]
Let $w$ be defined as \cref{eq:phase3 w}. 
The sum of the constant terms is $-1/2$ times 
\[ \begin{aligned} 
&\hspace{-1em} (1-\ell_2) \left(\frac{1}{2L} \|H^\star (y_1-y_0)\|^2 + \frac{L}{2} \|y_1-y_0\|^2 \right) + \ell_2 \left(\frac{1}{2L} \|H^\star (y_2-y_0)\|^2 \right. \\
&\hspace{-1em} \left.+ \frac{L}{2} \|y_2-y_0\|^2 \right) - \ell_3 \left(\frac{1}{2L} \|H^\star (y_1-y_3)\|^2 + \frac{L}{2} \|y_1-y_3\|^2 \right) \\
&\leftstackrel{\cref{eq:phase3 Hstar eigvector}}{=} (1-\ell_2)\left( -\langle H^\star(y_1-w), y_1-w \rangle + \langle H^\star(y_0-w), y_0-w \rangle \right) \\
&\qquad + \ell_2 \langle H^\star(y_2-y_0), y_2-y_0 \rangle + \ell_3 \langle H^\star(y_1-y_3), y_1-y_3 \rangle \\
&\leftstackrel{\cref{eq:Lagrange 0}}{=} \langle H^\star[\ell_3(y_1-y_3)-(\ell_1+\ell_3)(y_1-w)], y_1-w \rangle - \ell_3 \langle H^\star(y_1-y_3), y_3-w \rangle \\
&\qquad + \langle H^\star[(1-\ell_2)(y_0-w) - \ell_2(y_2-y_0)], y_0-w \rangle + \ell_2 \langle H^\star(y_2-y_0), y_2-w \rangle \\
&\leftstackrel{\cref{eq:Lagrange 0}\cref{eq:Lagrange Y}}{=} 0 - \ell_3 \langle H^\star(y_1-w) - H^\star(y_3-w), y_3-w \rangle \\
&\quad + 0 + \ell_2 \langle H^\star(y_2-w) - H^\star(y_0-w), y_2-w \rangle \\ 
&\leftstackrel{\cref{eq:phase3 w y}}{=} \sum_{i=0}^{3} \ell_i \langle H^\star (y_i-w), y_i-w \rangle 
\stackrel{\cref{eq:G recenter}}{=} \langle G, H^\star \rangle. 
\end{aligned} \]
Thus the sum of the inequalities in \cref{eq:phase3 sum} is \cref{eq:phase3} when $m(y_0) - f(y_0) \ge 0$. 
\end{proof} 

The sharpness of \cref{eq:phase3} is proved in \cref{thm:phase3 sharp}. 
\begin{theorem} \label{thm:phase3 sharp} 
Under the setting of \cref{thm:phase3}, the bound \cref{eq:phase3} is sharp. 
\end{theorem}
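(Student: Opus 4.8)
The plan is to reduce sharpness to the existence of a single function that turns the three inequalities \cref{eq:phase3 sum} into equalities. Re-reading the proof of \cref{thm:phase3}, the weighted combination $(1-\ell_2)\psi(y_1,y_0,H^\star)+\ell_2\psi(y_2,y_0,H^\star)-\ell_3\psi(y_1,y_3,H^\star)$ is shown to equal exactly $(m(y_0)-f(y_0)) - \tfrac12\langle G,H^\star\rangle$. Hence it suffices to produce some $f\in C^{1,1}_L(\R^2)$ for which $\psi(y_1,y_0,H^\star)=\psi(y_2,y_0,H^\star)=\psi(y_1,y_3,H^\star)=0$; for such an $f$ the error attains the bound \cref{eq:phase3}.

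First I would characterize the equality $\psi(u_1,u_2,H^\star)=0$. Expanding the definition \cref{eq:Lipscthiz stronger H} and collecting terms (the computation underlying \cref{lem:psi}) yields the identity $\psi(u_1,u_2,H^\star) = R(u_1,u_2) - \tfrac{1}{4L}\|\nabla f(u_1)-\nabla f(u_2)-H^\star(u_1-u_2)\|^2$, where $R(u_1,u_2)\le 0$ denotes the left-hand minus right-hand side of \cref{eq:Lipschitz stronger}. Since both summands are nonpositive, $\psi(u_1,u_2,H^\star)=0$ holds \emph{if and only if} (i) $\nabla f(u_1)-\nabla f(u_2)=H^\star(u_1-u_2)$ and (ii) \cref{eq:Lipschitz stronger} holds with equality at $(u_1,u_2)$. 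This splits the target into a gradient-matching condition and a value-matching condition at the four points $y_0,y_1,y_2,y_3$.

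Accordingly I would prescribe a first-order jet at these four points. For the gradients I set $g_i:=\nabla f(y_i)$ with $g_1-g_0=H^\star(y_1-y_0)$, $g_2-g_0=H^\star(y_2-y_0)$, and $g_1-g_3=H^\star(y_1-y_3)$; the three governing pairs form a spanning tree on $\{0,1,2,3\}$, so fixing $g_0=\mathbf{0}$ determines $g_1,g_2,g_3$ with no consistency obstruction, and \cref{eq:phase3 Hstar eigvector} identifies $g_2-g_0$ and $g_1-g_3$ with the two extreme eigendirections of $H^\star$. The values $f_i:=f(y_i)$ I fix, again along the same tree, so that \cref{eq:Lipschitz stronger} is an equality across each of the three edges. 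To upgrade this finite jet to a genuine $f\in C^{1,1}_L(\R^2)$ I would invoke a Whitney-type $C^{1,1}$ extension theorem for finite jets (of Wells / Le Gruyer), whose compatibility hypothesis is precisely that \cref{eq:Lipschitz stronger} holds for \emph{every} pair of nodes; its necessity is established by the proposition of \cref{sec:preliminaries}, and the sufficiency is the external extension result I would cite. By construction the three tree edges meet this condition with equality, so the extension satisfies $\psi=0$ on them, and the first paragraph then forces $m(y_0)-f(y_0)=\tfrac12\langle G,H^\star\rangle$.

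The main obstacle is verifying \cref{eq:Lipschitz stronger} for the three remaining (non-tree) pairs $(y_2,y_1)$, $(y_2,y_3)$, and $(y_0,y_3)$: this is exactly where the hypotheses $\ell_2>0$, $\ell_3<0$, $\langle y_2-y_1,y_3-y_1\rangle<0$, and $\ell_1\langle y_2-y_1,y_3-y_1\rangle-\ell_3\langle y_2-y_3,y_1-y_3\rangle<0$ must be spent, by substituting the prescribed gradient and value differences into the pairwise inequality and reducing it — with the help of $w$, \cref{eq:phase3 w y}, and \cref{eq:phase3 Hstar eigvector} — to these sign conditions. I expect this pairwise bookkeeping to be the technical heart of the argument. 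A self-contained alternative is to write down an explicit extremal $f$; note that it \emph{cannot} be quadratic, since the $H^\star$ of \cref{eq:Hstar 3} is asymmetric and its symmetric part carries an eigenvalue of magnitude exceeding $L$ (which is precisely why \cref{eq:phase2} fails in this region), so no quadratic in $C^{1,1}_L(\R^2)$ reaches the bound. One could instead patch a function that restricts to an upward parabola of curvature $L$ along the line through $y_0$ and $y_2$ and to a downward parabola of curvature $-L$ along the line through $y_1$ and $y_3$, but certifying that the patched gradient is globally $L$-Lipschitz is itself delicate, which is why I would prefer the jet-extension route.
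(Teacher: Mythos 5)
Your strategy is genuinely different from the paper's and its skeleton is sound: the identity $\psi(u_1,u_2,H)=R(u_1,u_2)-\tfrac{1}{4L}\|H(u_1-u_2)-(\nabla f(u_1)-\nabla f(u_2))\|^2$ (with $R\le 0$ the defect in \cref{eq:Lipschitz stronger}) is correct, attainment of \cref{eq:phase3} is indeed equivalent to forcing the three inequalities \cref{eq:phase3 sum} to be equalities, and the Wells--Le~Gruyer criterion for extending a finite $1$-jet to $C^{1,1}_L$ is in fact exactly the pairwise condition \cref{eq:Lipschitz stronger}. However, the proposal has a genuine gap: everything you have actually verified (choosing $g_i=H^\star(y_i-y_0)$ and fixing the values along a spanning tree) is the easy, hypothesis-free part, and the step you defer --- checking the compatibility inequality for the remaining pairs --- is the entire technical content of the theorem. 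Without it you have not produced an extremal function, only a wish list for one. Note also a point you do not flag: even the \emph{tree} edges are not free. Imposing equality in \cref{eq:Lipschitz stronger} in the direction $(y_1,y_0)$ forces $\tfrac{L}{4}\|y_1-y_0\|^2-\tfrac{1}{4L}\|g_1-g_0\|^2\ge 0$ for the reverse direction to survive, i.e.\ $\|H^\star(y_1-y_0)\|\le L\|y_1-y_0\|$; since the $H^\star$ of \cref{eq:Hstar 3} is non-normal with spectral norm exceeding $L$ and $y_1-y_0$ is not one of its eigendirections, this is not automatic and already consumes part of the geometric hypotheses. So the "bookkeeping" you postpone is not routine, and the proof is incomplete until it is done.

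For comparison, the paper takes the explicit route you mention and set aside: it exhibits the piecewise quadratic $f(u)=\tfrac{L}{2}\|u-w\|^2-L\langle (y_1-y_3)/\|y_1-y_3\|,\,u-w\rangle^2$ on the half-plane $\langle u-w,y_1-y_3\rangle\le 0$ and $f(u)=\tfrac{L}{2}\|u-w\|^2$ on the other half-plane, computes $m(y_0)-f(y_0)=\tfrac12\langle G,H^\star\rangle$ directly from \cref{eq:phase3 Hstar eigvector} and \cref{eq:G recenter}, and verifies $f\in C^{1,1}_L(\R^2)$ by a two-line check of differentiability across the interface and of \cref{eq:Lipschitz} for points in opposite half-planes (the cross term there has a favorable sign, so the check is short rather than delicate). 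Your jet-extension framework is a legitimate and arguably more systematic alternative --- it explains \emph{why} the extremal function must have these gradients --- but to be a proof it must either carry out the pairwise verification or be replaced by the explicit construction.
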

\begin{proof}
Let $w$ be defined as \cref{eq:phase3 w} and consider the piecewise quadratic function 
\[ f(u) = \left\{ \begin{aligned}
    &\frac{L}{2} \|u-w\|^2 - L\langle \frac{y_1-y_3}{\|y_1-y_3\|}, u-w \rangle^2 &&\text{if } \langle u-w, y_1-y_3 \rangle \le 0,  \\
    &\frac{L}{2} \|u-w\|^2 &&\text{if } \langle u-w, y_1-y_3 \rangle \ge 0. 
\end{aligned} \right. 
\]
Its function approximation error is 
\[ \begin{aligned}
m(&y_0) - f(y_0)
= \sum_{i=0}^{n+1} \ell_i y_i \\
&= \frac{L}{2} \sum_{i=0}^3 \ell_i \|y_i-w\|^2 - L\ell_1\langle \frac{y_1-y_3}{\|y_1-y_3\|}, y_1-w \rangle^2 - 2L\ell_3\langle \frac{y_1-y_3}{\|y_1-y_3\|}, y_3-w \rangle^2 \\
&= \frac{L}{2} \sum_{i=0}^3 \ell_i \|y_i-w\|^2 - L\ell_1\|y_1-w\|^2 - 2L\ell_3\|y_3-w\|^2 \\
&= \frac{L}{2} \left(\ell_0\|y_0-w\|^2 - \ell_1\|y_1-w\|^2 + \ell_2\|y_2-w\|^2 - \ell_3\|y_3-w\|^2\right) \\
&\leftstackrel{\cref{eq:phase3 Hstar eigvector}}{=} \frac{1}{2} \sum_{i=0}^3 \ell_i \langle y_i-w, H^\star(y_i-w) \rangle 
\stackrel{\cref{eq:G recenter}}{=} \frac{1}{2} \langle G, H^\star \rangle. 
\end{aligned} \]
Now we prove $f \in C_L^{1,1}(\R^n)$. 
Firstly, it is clear that $f$ is continuous on $\R^2$ and differentiable on the two half spaces $\{u:~ \langle u-w, y_1-y_3 \rangle < 0\}$ and $\{u:~ \langle u-w, y_1-y_3 \rangle > 0\}$. 
Then given any $u$ such that $\langle u-w, y_1-y_3 \rangle = 0$, it can be calculated for any $v\in\R^2$ that 
\begin{multline*}
    |f(u+v) - f(u) - \langle L(u-w), v \rangle| \\
    = \left\{ \begin{aligned} 
    &-\frac{L}{2}\|v\|^2 - L \langle \frac{y_1-y_3}{\|y_1-y_3\|}, v\rangle^2 &&\text{if }  \langle u+v-w, y_1-y_3 \rangle \le 0,  \\
    &-\frac{L}{2}\|v\|^2 &&\text{if }  \langle u+v-w, y_1-y_3 \rangle \ge 0. 
    \end{aligned} \right.
\end{multline*} 
Thus 
\[ \lim_{v\rightarrow\mathbf{0}} \frac{|f(u+v) - f(u) - \langle L(u-w), v \rangle|}{\|v\|} = 0, 
\]
which shows $f$ is differentiable with gradient $L(u-w)$ on $\{u:~ \langle u-w, y_1-y_3 \rangle = 0\}$. 
The condition \cref{eq:Lipschitz} is clearly satisfied if $u_1$ and $u_2$ are in the same half space. 
Now assume $\langle u_1-w, y_1-y_3 \rangle < 0$ and $\langle u_2-w, y_1-y_3 \rangle > 0$. 
Then, we have 
\[ \begin{aligned} 
&\|\nabla f(u_1) - \nabla f(u_2)\|^2 \\
&= \|L(u_1-w) - 2L\langle y_1-y_3, u_1-w\rangle (y_1-y_3)/\|y_1-y_3\|^2 - L(u_2-w)\|^2 \\
&= L^2\|u_1-u_2\|^2 + 4L^2 \langle u_1-w, y_1-y_3\rangle \langle u_2-w, y_1-y_3 \rangle /\|y_1-y_3\|^2 \\
&< L^2\|u_1-u_2\|^2, 
\end{aligned} \]
which shows \cref{eq:Lipschitz} always holds.
Therefore $f \in C_L^{1,1}(\R^n)$. 
\end{proof}

Now consider problem \cref{prob:D} in the case depicted in \cref{fig:phase3 cone}. 
If we divide the objective function by $-\ell_2<0$, the coefficient before $f(y_i)$ becomes $\alpha_i = -\ell_i/\ell_2$ for all $i\in\{1,2,\dots,n+1\}$. 
We have $\sum_{i=0}^{n+1} \alpha_i = 0$ and $\sum_{i=0}^{n+1} \alpha_i y_i = 0$. 
Moreover, we have $\alpha_2 = -1$, so we can treat the new optimization problem as if the objective function is $m(y_2)-f(y_2)$ with $m$ being the linear function interpolating $f$ on $\{y_0,y_1,y_3\}$, which is the same as the case in \cref{fig:phase3 triangle}. 
The sharp error bound for the case in \cref{fig:phase3 cone} can be calculated using exactly \cref{eq:Hstar 3,eq:phase3} except the final result needs to be multiplied by $-1$. 

\section{Discussion} \label{sec:discussion}
Results in \cref{sec:phase2,sec:phase3} provide the sharp error bound for bivariate linear interpolation and extrapolation for any configuration of affinely independent $\cY$ and $y_0$. 
Despite this, we are unable to prove the connection between the signs of $\{\mu_{ij}\}$ and whether $y_0$ is inside the shaded areas in \cref{fig:phase2}. 
The definition of $\{\mu_{ij}\}$ involves the eigenvectors of $G$, but eigendecomposition in general does not have closed form solutions, making it difficult to analyze the signs of $\{\mu_{ij}\}$. 
We tried without success to prove this connection using only the properties of eigenvectors. 

The problem of determining analytically when $\{\mu_{ij}\}$ are all non-negative becomes more difficult in higher dimension. 
Unlike triangles, which can only have obtuse angles at no more than one vertex, simplices in higher dimension can violate condition \cref{eq:acute simplex} in many ways. 
They can have $\langle y_j-y_i, y_k-y_i \rangle < 0$ at multiple vertices $y_i$ and at the same time for multiple $(j,k)$ for each $y_i$. 
While there can only be up to four disconnected subset of $\R^2$ where $\{\mu_{ij}\}$ has negative elements, our numerical experiment shows this number can go up to at least twenty for trivariate ($n=3$) linear extrapolation. 
It would be difficult to describe all those areas, let alone analyzing them. 

Ultimately, we hope to find a general formula for the sharp bound on the function approximation error of linear interpolation and extrapolation. 
We found this bound might be $\langle G, H^\star \rangle/2$, but the matrix $H^\star$ is not determined, as shown in the two definitions of $H^\star$ \cref{eq:Hstar,eq:Hstar 3}. 
The matrix $H^\star$ is tied to $\{\mu_{ij}\}$ in \cref{eq:mu1 -,eq:mu1 +}, and we believe even when there are negatives in $\{\mu_{ij}\}$, it is still tied in the same manner to a version of $\{\mu_{ij}\}$ that is modified to be all non-negative. 
In fact, \cref{eq:mu0 -,eq:mu0 +,eq:mu1 -,eq:mu1 +} all hold true under the setting of \cref{thm:phase3} if $H^\star$ is defined as \cref{eq:Hstar 3} and $\{\mu_{ij}\}$ is defined as 
\[ \begin{aligned} 
\mu_{10} &= 1-\ell_2, 
&\mu_{13} &= -\ell_3, 
&\mu_{20} &= \ell_2, 
&\mu_{23} &= 0, 
\end{aligned} \]
which are the coefficients in \cref{eq:phase3 sum}. 
However, for now we do not know if a general formula for the sharp bound exists or not. 
The best general formula we can find is \cref{eq:phase1}, which is always a valid bound but not always sharp. 

Comparing to the condition \cref{eq:Lipschitz}, it is more customary in approximation theory literature to assume the function $f$ twice differentiable on some $Q \subseteq \R^n$, and use $\sup_{u\in Q} \|\nabla^2 f(u)\|$ in place of the Lipschitz constant $L$, where the norm is the spectral norm. 
For example, $Q$ is set to a star-shaped subset of $\R^n$ in \cite{ciarlet1972general} and to $\conv(\cY)$ in \cite{waldron1998error}. 
For our result \cref{eq:phase1}, $Q$ at least needs to cover (almost everywhere) the star-shaped set $\cup_{i=0}^{n+1} \{\alpha y_i + (1-\alpha)w:~ 0\le\alpha\le1\}$. 
For \cref{eq:phase2}, we need $Q$ to cover 
\[ \bigcup_{(i,j)\in\cI_+\times\cI_-} 
\left( \begin{aligned} 
&\{\alpha y_i + (1-\alpha)[(u_i+u_j)/2+H^\star(u_i-u_j)/(2L)]:~ 0\le\alpha\le1 \} \\
&\qquad \cup \{\alpha y_j + (1-\alpha)[(u_i+u_j)/2+H^\star(u_i-u_j)/(2L)]:~ 0\le\alpha\le1 \}
\end{aligned} \right), 
\] 
where $H^\star$ is defined as \cref{eq:Hstar}. 
For \cref{eq:phase3}, we need 
\[ Q \supseteq \{\alpha y_2 + (1-\alpha)w:~ 0\le\alpha\le1 \} \cup \{\alpha y_3 + (1-\alpha)w:~ 0\le\alpha\le1 \}, 
\] 
where $w$ is defined as \cref{eq:phase3 w}. 
Considering these two assumptions have similar effect, we opted for the simpler \cref{eq:Lipschitz} in our analyses and left the other for the discussion here. 

This paper does not contain any analysis on the gradient approximation error, which is worth investigating, considering linear interpolation is sometimes used for approximating the gradient rather than the function. 
However, we find such problem hard to solve or even define. 
If the error is measured as usual by the Euclidean norm of the difference, then the problem of finding its sharp bound can be formulated in a way similar to \cref{prob:D}: 
\begin{equation} \everymath{\displaystyle} \begin{array}{ll}
    \max_{f} &\|\nabla m(y_0) - \nabla f(y_0)\| \\
    \text{s.t. } &f \in C_L^{1,1}(\R^n). 
\end{array}
\end{equation}
No matter how the constraint is handled, the objective of this problem is to maximize a convex function, making it a much less tractable nonconvex problem. 
The problem can become tractable if the error is measured differently, but whether the measure is meaningful depends on the application. 
Some results regarding this error can be found in \cite{handscornb1995errors, cao2005error, berahas2022theoretical} if anyone is interested.

\section*{Acknowledgment}
We would like to acknowledge the help from Dr. Xin Shi and Yunze Sun in solving \eqref{prob:quadratic}. 
We would also like to thank Dr. Shuonan Wu for carefully reading this paper and discussing its relationship to finite element methods. 

\bibliographystyle{siamplain}
\bibliography{references}
\end{document}